\setlist[1]{itemsep=5pt}
\newcommand{\comment}[1]{}
      \def\@setcopyright{}
      \def\serieslogo@{}
\newcommand{\Complex}{\mathbb C}
\newcommand{\Real}{\mathbb R}
\newcommand{\ddbar}{\overline\partial}
\newcommand{\pr}{\partial}
\newcommand{\ol}{\overline}
\newcommand{\Td}{\widetilde}
\newcommand{\norm}[1]{\left\Vert#1\right\Vert}
\newcommand{\abs}[1]{\left\vert#1\right\vert}
\newcommand{\set}[1]{\left\{#1\right\}}
\newcommand{\To}{\rightarrow}
\newcommand{\cali}[1]{\mathscr{#1}}
\theoremstyle{plain}
\newtheorem{thm}{Theorem}[section]
\newtheorem{lem}[thm]{Lemma}
\theoremstyle{definition}
\newtheorem{defn}[thm]{Definition}
\theoremstyle{remark}
\newtheorem{con}[thm]{Conjecture}
\numberwithin{equation}{section}
\begin{document}
\title[]{Bergman kernel asymptotics and a pure analytic proof of Kodaira embedding theorem}
\author[]{Chin-Yu Hsiao}
\address{Institute of Mathematics, Academia Sinica, 6F, Astronomy-Mathematics Building,
No.1, Sec.4, Roosevelt Road, Taipei 10617, Taiwan}
\thanks{The author was partially supported by Taiwan Ministry of Science of Technology project 
103-2115-M-001-001 and the Golden-Jade fellowship of Kenda Foundation}
\email{chsiao@math.sinica.edu.tw or chinyu.hsiao@gmail.com}

\begin{abstract}
In this paper, we survey recent results in~\cite{HM12} about the asymptotic expansion of Bergman kernel and we give a Bergman kernel proof of Kodaira embedding theorem. 
\end{abstract}

\maketitle \tableofcontents

\section{Introduction and Set up} \label{s-I}

Let $L$ be a holomorphic line bundle over a complex manifold $M$ and let $L^k$ be the $k$-th tensor power of $L$.
The Bergman projection $P_k$ is the orthogonal projection onto the space of $L^2$-integrable holomorphic sections of $L^k$. 
The study of the large $k$ behaviour of $P_k$ is an active research subject in complex geometry and is closely related to topics like the structure of algebraic manifolds, the existence of canonical K\"ahler metrics, Toeplitz quantization, equidistribution of zeros of holomorphic sections, quantum chaos and mathematical physics.
We refer the reader to the book \cite{MM07} for a comprehensive study of the Bergman kernel and its applications and also to the survey \cite{Ma10}.

When $M$ is compact and $L$ is positive, Catlin~\cite{Cat97} and ~Zelditch~\cite{Zel98} established the asymptotic expansion of the Bergman kernel (see Theorem~\ref{t-gue140923a}) by using a fundamental result by Boutet de Monvel-Sj\"{o}strand~\cite{BouSj76} about the asymptotics of the Szeg\"{o} kernel on a strictly pseudoconvex boundary. X. Dai, K. Liu and X. Ma \cite {DLM06} obtained the full off-diagonal asymptotic expansion and Agmon estimates of the Bergman kernel for a high power of positive line bundle on a compact complex manifold by using the heat kernel method.  Ma and Marinescu~\cite{MM07}, \cite{MM08a} proved the asymptotic expansion for yet another generalization of the Kodaira Laplacian, namely the renormalized Bochner-Laplacian on a symplectic manifold and also showed the existence of the estimate on a large class of non-compact manifolds. 
Another proof based on microlocal analysis of the existence of the full asymptotic expansion for the Bergman kernel for a high power of a positive line bundle on a compact complex manifold was obtained by Berndtsson, Berman and Sj\"{o}strand~ \cite{BBS04}.

In~\cite{HM12}, we impose a very mild semiclassical local condition on $\ddbar_k$, namely the $O(k^{-N})$ small spectral gap on an open set $D\Subset M$ (see Definition~\ref{s1-d2bis}), where $\ddbar_k$ denotes the Cauchy-Riemann operator with values in $L^k$. We prove that the Bergman kernel admits an asymptotic expansion on $D$ if $\ddbar_{k}$ has $O(k^{-N})$ small spectral gap on $D$, cf.\ Theorem \ref{t-gue140923}. Our approach bases on the microlocal Hodge decomposition for Kohn Laplacian established in~\cite{Hsiao08}. The distinctive feature of these asymptotics is that they work under minimal hypotheses. This allows us to apply them in situations which were up to now out of reach. We illustrate this in the study of the Bergman kernels of positive but singular
Hermitian line bundles (see Theorem~\ref{t-gue140923II}). 

\subsection{Set up}\label{s-gue140926}

In this paper, we let $M$ be a not necessary compact complex manifold of dimension $n$ with a smooth positive $(1,1)$ form $\Theta$. $\Theta$ induces Hermitian metrics on the complexified tangent bundle $\Complex TM$ and $T^{*0,q}M$ bundle of $(0,q)$ forms of $M$ , $q=0,1,\ldots,n$. We shall denote all these Hermitian metrics  by $\langle\,\cdot\,|\,\cdot\,\rangle$.
Let $(L,h^L)\To M$ be a holomorpic line bundle over $M$, where $h^L$ denotes the Hermitian fiber metric of $L$. Let $R^L$ be the canonical curvature two form induced by $h^L$.  Given a local trivializing section $s$ of $L$ on an open subset $D\subset M$ we define the associated local weight of $h^L$ by
\begin{equation} \label{s1-e1}
\abs{s(x)}^2_{h^L}=e^{-2\phi(x)},\quad\phi\in C^\infty(D, \Real).
\end{equation}
Then $R^L|_D=2\pr\ddbar\phi$. Let $(L^k,h^{L^k})$ be the $k$-th tensor power of the line bundle $L$. If $s$ is a local
trivializing section of $L$, $\abs{s}^2_{h^L}=e^{-2\phi}$, then $s^k$ is a local trivializing
section of $L^k$ and $\abs{s^k}^2_{h^{L^k}}=e^{-2k\phi}$. 
We take $dv_M=dv_M(x)$
as the volume form on $M$ induced by $\Theta$. For every $q=0,1,2,\ldots,n$, let $(\,\cdot\,|\,\cdot\,)$ and $(\,\cdot\,|\,\cdot\,)_{h^{L^k}}$ be the standard $L^2$ inner products on $\Omega^{0,q}_0(M):=C^\infty_0(M,T^{*0,q}M)$ and $\Omega^{0,q}_0(M,L^k):=C^\infty_0(M,T^{*0,q}M\otimes L^k)$ respectively induced by $dv_M$, $\langle\,\cdot\,|\,\cdot\,\rangle$ and $h^{L^k}$ and we write $\norm{\cdot}$ and $\norm{\cdot}_{h^{L^k}}$ to denote the corresponding norms. Let
$L^2_{(0,q)}(M)$ and $L^2_{(0,q)}(M,L^k)$ be the completions of $\Omega^{0,q}_0(M)$ and $\Omega^{0,q}_0(M,L^k)$ with respect to $\norm{\cdot}$ and $\norm{\cdot}_{h^{L^k}}$ respectively.

Let $\ddbar_{k}:C^\infty(M,L^k)\To\Omega^{0,1}(M,L^k)$ be the Cauchy-Riemann operator with values in $L^k$. We extend 
$\ddbar_k$ to $L^2(M,L^k):=L^2_{(0,0)}(M,L^k)$ by $\ddbar_k:{\rm Dom\,}\ddbar_k\subset L^2(M, L^k)\To L^2_{(0,1)}(M, L^k)$,
where ${\rm Dom\,}\ddbar_k:=\{u\in L^2(M, L^k);\, \ddbar_ku\in L^2_{(0,1)}(M, L^k)\}$. Let
\[P_k:L^2(M,L^k)\To{\rm Ker\,}\ddbar_k\]
be the Bergman projection, i.e. $P_k$ is the orthogonal projection onto ${\rm Ker\,}\ddbar_k$ with respect to $(\,\cdot\,|\,\cdot\,)_{h^{L^k}}$ and let $P_k(x,y)\in C^\infty(M\times M,\mathscr L(L^k_y,L^k_x))$ be the distribution kernel of $P_k$. 

 \section{Terminology in semi-classical analysis}\label{s:prelim}
 
In this section, we collect some definitions and notations in semi-classical analysis. 

Let $B_k:L^2(M,L^k)\To L^2(M,L^k)$ be a continuous operator with smooth kernel $B_k(x,y)$. Let $s$, $s_1$ be local trivializing sections of $L$ on $D_0\Subset M$, $D_1\Subset M$ respectively, $\abs{s}^2_{h^L}=e^{-2\phi}$, $\abs{s_1}^2_{h^L}=e^{-2\phi_1}$. The localized operator (with respect to the trivializing sections $s$ and $s_1$) of $B_k$ is given by 
\begin{equation} \label{e-gue140824II}
\begin{split}
B_{k,s,s_1}:L^2_{{\rm comp\,}}(D_1)&\To L^2(D),\\
u&\To e^{-k\phi}s^{-k}B_k(s^k_1e^{k\phi_1}u).
\end{split}
\end{equation} 
and let $B_{k,s,s_1}(x,y)\in C^\infty(D\times D_1)$ be the distribution kernel of $B_{k,s,s_1}$, where 
\[L^2_{{\rm comp\,}}(D_1):=\set{v\in L^2(D_1);\, {\rm Supp\,}v\Subset D_1}.\]

Let $D$ be a local coordinate patch of $M$ and let $A_k:C^\infty_0(D)\To C^\infty(D)$ be a $k$-dependent continuous operator
with smooth kernel $A_k(x,y)$. We write $A_k\equiv0\mod O(k^{-\infty})$ (on $D$) or $A_k(x,y)\equiv0\mod O(k^{-\infty})$ (on $D$) if $A_k(x, y)$ satisfies $\abs{\pr^\alpha_x\pr^\beta_yA_k(x, y)}=O(k^{-N})$ locally uniformly
on every compact set in $D\times D$, for all multi-indices $\alpha, \beta\in\mathbb N^{2n}$ and all $N>0$. Let $B_k:L^2(M,L^k)\To L^2(M,L^k)$ be a $k$-dependent continuous operator
with smooth kernel. We write $B_k\equiv0\mod O(k^{-\infty})$ if $B_{k,s,s_1}\equiv0\mod O(k^{-\infty})$ for every local trivializing sections $s$ and $s_1$. 

\begin{defn} \label{d-gue140826}
Let $D$ be a local coordinate patch of $M$. Let $S(1;W)=S(1)$ be the set of all
$a\in C^\infty(D)$ such that for every $\alpha\in\mathbb N^{2n}$, there
exists $C_\alpha>0$, such that $\abs{\pr^\alpha_xa(x)}\leq
C_\alpha$ on $W$. If $a=a(x,k)$ depends on $k\in]1,\infty[$, we say that
$a(x,k)\in S_{{\rm loc\,}}(1;D)=S_{{\rm loc\,}}(1)$ if $\chi(x)a(x,k)$ uniformly bounded
in $S(1)$ when $k$ varies in $]1,\infty[$, for any $\chi\in
C^\infty_0(D)$. For $m\in\Real$, we put $S^m_{{\rm
loc}}(1;D)=S^m_{{\rm loc}}(1)=k^mS_{{\rm loc\,}}(1)$. If $a_j\in S^{m_j}_{{\rm
loc\,}}(1)$, $m_j\searrow-\infty$, we say that $a\sim
\sum\limits^\infty_{j=0}a_j$ (in $S^{m_0}_{{\rm loc\,}}(1)$) if
$a-\sum\limits^{N_0}_{j=0}a_j\in S^{m_{N_0+1}}_{{\rm loc\,}}(1)$ for every
$N_0$.  For a given sequence $a_j$ as above, we can always find such an asymptotic sum $a$ and $a$ is unique up to an element in $S^{-\infty}_{{\rm loc\,}}(1)=S^{-\infty}_{{\rm loc\,}}(1;W):=\bigcap_mS^m_{{\rm loc\,}}(1)$. 
\end{defn}

\section{Asymptotic expansion of Bergman kernel}\label{s-II}

Let $s$, $s_1$ be local trivializing sections of $L$ on $D_0\Subset M$, $D_1\Subset M$ respectively, $\abs{s}^2_{h^L}=e^{-2\phi}$, $\abs{s_1}^2_{h^L}=e^{-2\phi_1}$. 
Let $P_{k,s,s_1}$ be the localized operator of $P_k$ given by \eqref{e-gue140824II} and let $P_{k,s,s_1}(x,y)\in C^\infty(D\times D_1)$ be the distribution kernel of $P_{k,s,s_1}$. When $s=s_1$, $D=D_1$, we write $P_{k,s}:=P_{k,s,s_1}$, $P_{k,s}(x,y):=P_{k,s,s_1}(x,y)$. When $x=y$, $P_{k,s}(x,x)$ is independent of $s$. We write $P_k(x):=P_{k,s}(x,x)$ and we call $P_k(x)$ Bergman kernel function. Let $f_1\in C^\infty(M,L^k),\ldots,f_{d_k}\in C^\infty(M,L^k)$ be orthonormal frame for ${\rm Ker\,}\ddbar_{k}$, $d_k\in\set{0}\bigcup\mathbb N\bigcup\set{\infty}$. On $D_0$ and $D_1$, we write 
\[\begin{split}
&f_j=s^ke^{k\phi}\Td f_j,\ \ \Td f_j\in C^\infty(D),\ \ j=1,2,\ldots,d_k,\\
&f_j=s^k_1e^{k\phi_1}\hat f_j,\ \ \hat f_j\in C^\infty(D_1),\ \ j=1,2,\ldots,d_k.
\end{split}\]
We can check that
\begin{equation}\label{e-gue140923}
\begin{split}
&P_{k,s,s_1}(x,y)=\sum^{d_k}_{j=1}\Td f_j(x)\ol{\hat f_j}(y),\\
&P_{k}(x)=\sum^{d_k}_{j=1}\abs{f_j(x)}^2_{h^{L^k}}.
\end{split}
\end{equation}

We recall $O(k^{-N})$ small spectral gap property introduced in~\cite{HM12}
\begin{defn} \label{s1-d2bis}
Let $D\subset M$. We say that $\ddbar_{k}$ has \emph{$O(k^{-N})$ small spectral gap on $D$} if there exist constants $C_D>0$,  $N\in\mathbb N$, $k_0\in\mathbb N$, such that for all $k\geq k_0$ and $u\in C^\infty_0(D,L^k)$, we have  
\[\norm{(I-P_{k})u}_{h^{L^k}}\leq C_D\,k^{N}\norm{\ddbar_{k}u}_{h^{L^k}}.\]
\end{defn}

It should be mentioned that in~\cite{HM12}, we actually introduced $O(k^{-N})$ small spectral gap for Kodaira Laplacian. Note that $O(k^{-N})$ small spectral gap for $\ddbar_{k}$ implies $O(k^{-N})$ small spectral gap for Kodaira Laplacian.

One of the main results in~\cite{HM12} is the following

\begin{thm}\label{t-gue140923}
With the notations and assumptions used before, let $s$ be a local trivializing section of $L$ on an open set $D\subset M$, $\abs{s}^2_{h^L}=e^{-2\phi}$, and assume that $R^L$ is positive on $D$. Suppose that $\ddbar_{k}$ has $O(k^{-N})$ small spectral gap on $D$. Then, $\chi_1P_k\chi\equiv0\mod O(k^{-\infty})$ for every $\chi_1\in C^\infty_0(M)$, $\chi\in C^\infty_0(D)$ with ${\rm Supp\,}\chi_1\bigcap{\rm Supp\,}\chi=\emptyset$ and 
\[\mbox{$P_{k,s}(x,y)\equiv e^{ik\Psi(x,y)}b(x,y,k)\mod O(k^{-\infty})$ on $D$},\]
where $b(x,y,k)\sim\sum\limits^\infty_{j=0}b_j(x,y)k^{n-j}$ in the sense of Definition~\ref{d-gue140826}, $b_j(x,y)\in C^\infty(D\times D)$, $j=0,1,\ldots$, $b_0(x,x)=(2\pi)^{-n}\big|\det R^L(x)\big|$ and 
\begin{equation}\label{prop_psi}
\begin{split}
&\Psi(x,y)\in C^\infty(D\times D),\ \ \Psi(x,y)=-\ol\Psi(y,x)\,,\\
& \exists\, c>0:\ {\rm Im\,}\Psi\geq c\abs{x-y}^2\,,\ \Psi(x,y)=0\Leftrightarrow x=y \,,
\end{split}
\end{equation}
for any $p\in D$, take local holomorphic coordinates $z=(z_1,\ldots,z_n)$ vanishing at $p$, then near $(p,p)$,
\begin{equation}\label{e-gue140923f}
\Psi(z,w)=i(\phi(z)+\phi(w))-2i\sum\limits_{\alpha,\beta\in(\set{0}\bigcup\mathbb N)^n,\abs{\alpha}+\abs{\beta}\leq N}
\frac{\pr^{\abs{\alpha}+\abs{\beta}}\phi}{\pr z^\alpha\pr\ol z^\beta}(0)\frac{z^\alpha\ol w^\beta}{\alpha!\beta!}+O(\abs{(z,w)}^{N+1}),\ \ \forall N\in\mathbb N,
\end{equation}
where $\det R^L(x)=\lambda_1(x)\cdots\lambda_n(x)$, $\lambda_j(x)$, $j=1,\ldots,n$, are the eigenvalues of $R^L$ with respect to $\langle\,\cdot\,|\,\cdot\,\rangle$.

In particular, $P_k(x)\sim\sum\limits^\infty_{j=0}b_j(x,x)k^{n-j}$ in the sense of Definition~\ref{d-gue140826}.
\end{thm}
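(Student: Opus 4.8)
The plan is to reduce the global statement to a local model on $D$, to construct there an explicit approximate Bergman projection $S_k$ of complex Fourier integral operator type, and then to use the $O(k^{-N})$ small spectral gap to identify $S_k$ with the localized true projection $P_{k,s}$ modulo $O(k^{-\infty})$. Under the trivialization $s$, a section lies in $\Ker\ddbar_k$ precisely when its local representative is $\Td f=e^{-k\phi}g$ with $g$ holomorphic, and a direct computation gives $e^{-k\phi}\ddbar(e^{k\phi}\,\cdot\,)=\ddbar+k(\ddbar\phi)\wedge=:\ddbar_{k,s}$, so that the range of $P_{k,s}$ is annihilated by the semiclassical operator $\ddbar_{k,s}$. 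Thus everything is governed by the Kohn Laplacian $\ddbar_{k,s}^*\ddbar_{k,s}$ on functions, whose semiclassical behaviour is controlled by the positivity of $R^L=2\pr\ddbar\phi$ on $D$.

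\textbf{Step A (local parametrix; the main obstacle).} Following the microlocal Hodge decomposition of \cite{Hsiao08}, I would construct $S_ku(x)=\int e^{ik\Psi(x,y)}b(x,y,k)u(y)\,dy$ with $b\sim\sum_j b_jk^{n-j}$ in the sense of Definition~\ref{d-gue140826}. The phase is fixed by requiring $\ddbar_{k,s}\bigl(e^{ik\Psi(\cdot,y)}\bigr)\equiv0$ to infinite order on the diagonal; since $\ddbar_{k,s}e^{ik\Psi}=k\bigl(i\ddbar_x\Psi+\ddbar\phi\bigr)e^{ik\Psi}$, this is the eikonal equation $\ddbar_x\Psi=i\,\ddbar\phi$ together with the symmetry $\Psi(x,y)=-\ol\Psi(y,x)$, whose formal solution is exactly the polarization \eqref{e-gue140923f} of the weight $\phi$. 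The crucial point is that the complex Hessian of ${\rm Im\,}\Psi$ transverse to the diagonal equals the Levi form $\pr\ddbar\phi$, so positivity of $R^L$ forces ${\rm Im\,}\Psi\geq c\abs{x-y}^2$ and $\Psi(x,y)=0\Leftrightarrow x=y$, which is \eqref{prop_psi}. The amplitudes $b_j$ are then obtained from the associated transport equations, and normalizing $S_k$ to be an approximate projection fixes $b_0(x,x)=(2\pi)^{-n}\abs{\det R^L(x)}$ through a Gaussian integral in the fibre directions. This stationary-phase and transport analysis, together with the verification that $S_k$ is a genuine approximate orthogonal projection, is the technical heart and I expect it to be the principal difficulty; it is precisely the content imported from \cite{Hsiao08}.

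\textbf{Step B (gluing via the spectral gap).} By construction $S_k$ enjoys: (A) $\ddbar_{k,s}S_k\equiv0\bmod O(k^{-\infty})$; (B) $S_k\equiv S_k^*$; and (C) $S_kv\equiv v$ for $v$ in the approximate kernel of $\ddbar_{k,s}$. Transferring $S_k$ back to an operator $\Td S_k$ on $L^2(M,L^k)$ via the trivialization, (A) reads $\ddbar_k\Td S_ku\equiv0$, so Definition~\ref{s1-d2bis} applied to $u\mapsto\Td S_ku$ gives $\norm{(I-P_k)\Td S_ku}_{h^{L^k}}\le C_Dk^N\norm{\ddbar_k\Td S_ku}_{h^{L^k}}=O(k^{-\infty})\norm{u}_{h^{L^k}}$, i.e.\ $P_k\Td S_k\equiv\Td S_k$. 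On the other hand $P_ku\in\Ker\ddbar_k$ is holomorphic, so (C) yields $\Td S_kP_k\equiv P_k$, and taking adjoints together with (B) gives $P_k\Td S_k\equiv P_k$. Comparing the two expressions for $P_k\Td S_k$ forces $P_k\equiv\Td S_k$, hence $P_{k,s}(x,y)\equiv e^{ik\Psi(x,y)}b(x,y,k)\bmod O(k^{-\infty})$ on $D$.

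\textbf{Step C (consequences).} The off-diagonal estimate $\chi_1P_k\chi\equiv0$ for $\chi_1,\chi$ of disjoint support follows at once: for $x,y$ in disjoint compacta one has $\abs{e^{ik\Psi(x,y)}}=e^{-k\,{\rm Im\,}\Psi(x,y)}\le e^{-ck\delta^2}=O(k^{-\infty})$ by \eqref{prop_psi}, while the spectral-gap argument of Step~B controls $P_k\chi$ by $\Td S_k\chi$ globally on $M$. Finally, restricting the expansion of $P_{k,s}(x,y)$ to $x=y$ in \eqref{e-gue140923} and recalling $P_k(x)=P_{k,s}(x,x)$ with $\Psi(x,x)=0$ and $b_0(x,x)=(2\pi)^{-n}\abs{\det R^L(x)}$, one obtains $P_k(x)\sim\sum_{j\ge0}b_j(x,x)k^{n-j}$ in the sense of Definition~\ref{d-gue140826}, as asserted.
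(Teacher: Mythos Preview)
The paper does not itself prove this theorem: it is quoted as ``one of the main results in~\cite{HM12}'', and the only indication of method is that the argument ``bases on the microlocal Hodge decomposition for Kohn Laplacian established in~\cite{Hsiao08}''. Your proposal is precisely a sketch of that strategy---build a local approximate projector $S_k$ of complex FIO type via~\cite{Hsiao08} (Step~A), then use the $O(k^{-N})$ small spectral gap of Definition~\ref{s1-d2bis} to identify it with $P_{k,s}$ (Step~B)---so at the level of approach you are aligned with the cited source.

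Two points in Step~B deserve care. First, the relation $\Td S_kP_k\equiv P_k$ cannot hold as written, because $\Td S_k$ carries cutoffs supported in $D$ while $P_ku$ is a global holomorphic section; what property~(C) actually gives, once nested cutoffs $\chi\prec\chi'$ in $D$ are inserted and the off-diagonal decay of $S_k$ is invoked, is the localized identity $\chi\Td S_k\chi'P_k\equiv\chi P_k$. With this understood, your algebra combining $P_k\Td S_k\equiv\Td S_k$ and (the adjoint of) $\Td S_kP_k\equiv P_k$ goes through at the level of Schwartz kernels on $D\times D$, which is exactly what the theorem asserts. Second, the spectral gap in Definition~\ref{s1-d2bis} is only an $L^2$ bound, so from $\norm{(I-P_k)\Td S_ku}_{h^{L^k}}=O(k^{-\infty})\norm{u}$ one does not immediately get the $C^\infty$ kernel statement $P_k\Td S_k\equiv\Td S_k\bmod O(k^{-\infty})$; in~\cite{HM12} this upgrade is carried out via semiclassical Sobolev/elliptic estimates for the Kodaira Laplacian, a step you should flag explicitly. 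Similarly, the off-diagonal claim $\chi_1P_k\chi\equiv0$ for $\chi_1$ supported outside $D$ cannot come from the phase bound in~\eqref{prop_psi} alone, since $\Psi$ is only defined on $D\times D$; it follows instead from the global operator identity $P_k\chi\equiv\Td S_k\chi$ obtained via the spectral gap, after which $\chi_1\Td S_k\chi\equiv0$ is immediate because $\Td S_k$ has kernel supported in $D\times D$. With these localizations and the $L^2\to C^\infty$ step made explicit, your outline is correct and matches the method the paper attributes to~\cite{HM12}.
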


\subsection{Big line bundles and Shiffman conjecture}

As an application of Theorem~\ref{t-gue140923}, we will establish Bergman kernel asymptotic expansion for big line bundle and this yields yet another proof of the Shiffman conjecture. Until further notice, we assume that $M$ is compact. We recall

\begin{con}[Shiffman, 1990]
If $h^L$ is a singular Hermitian metric, smooth outside a proper analytic set $\Sigma$, $R^L>0$ in the sense of current, then $L$ is big.
\end{con}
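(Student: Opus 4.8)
The plan is to reduce the bigness of $L$ to a lower bound of the form $h^0(M,L^k)\gtrsim k^n$, and to extract that bound from the Bergman kernel expansion of Theorem~\ref{t-gue140923} applied on the regular locus $M\setminus\Sigma$. First I would record the trace identity
\[
d_k=\int_M P_k(x)\,dv_M(x),
\]
which follows at once from the second line of \eqref{e-gue140923} together with the orthonormality of the frame $f_1,\dots,f_{d_k}$ of $\Ker\ddbar_k$. Here $\Ker\ddbar_k$ denotes the space of $L^2$ holomorphic sections of $L^k$ for the singular metric $h^{L^k}$; since such a section is holomorphic and $L^2$ on $M\setminus\Sigma$ it extends holomorphically across the analytic set $\Sigma$, so $d_k\le h^0(M,L^k)$. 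Consequently it is enough to find a constant $c>0$ with $\int_M P_k(x)\,dv_M(x)\ge c\,k^n$ for all large $k$.

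Next I would fix an open set $D\Subset M\setminus\Sigma$. On $M\setminus\Sigma$ the metric $h^L$ is smooth, and the hypothesis that $R^L>0$ in the sense of currents restricts there to a genuine smooth, strictly positive $(1,1)$-form; hence $R^L>0$ on $D$. To run Theorem~\ref{t-gue140923} on $D$ the only hypothesis I must check is that $\ddbar_k$ has the $O(k^{-N})$ small spectral gap on $D$ in the sense of Definition~\ref{s1-d2bis}. For $u\in C^\infty_0(D,L^k)$ the section $(I-P_k)u$ is the $L^2$-minimal solution of $\ddbar_k v=\ddbar_k u$, so its norm is dominated by that of any solution $w$. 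I would produce such a $w$ by the H\"ormander--Demailly $L^2$ estimate for $\ddbar_k$ on the compact manifold $M$ equipped with the singular metric $h^{L^k}$: since $\ddbar_k u$ is supported in $D$, where the curvature is bounded below by a fixed positive multiple of $\Theta$, the Bochner--Kodaira--Nakano inequality yields a solution with
\[
\norm{w}^2_{h^{L^k}}\le\frac{C_D}{k}\,\norm{\ddbar_k u}^2_{h^{L^k}}.
\]
Therefore $\norm{(I-P_k)u}_{h^{L^k}}\le C_D k^{-1/2}\norm{\ddbar_k u}_{h^{L^k}}\le C_D\norm{\ddbar_k u}_{h^{L^k}}$, and the small spectral gap holds on $D$ with $N=0$.

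With the hypotheses of Theorem~\ref{t-gue140923} now in force on $D$, the Bergman kernel function satisfies $P_k(x)\sim\sum_j b_j(x,x)k^{n-j}$ uniformly on compacta of $D$, with $b_0(x,x)=(2\pi)^{-n}\abs{\det R^L(x)}>0$ on $D$. Integrating the leading term over a fixed compact set $K\subset D$ of positive volume gives, for $k$ large,
\[
\int_M P_k(x)\,dv_M(x)\ge\int_K P_k(x)\,dv_M(x)\ge\tfrac12 (2\pi)^{-n}\Big(\int_K\abs{\det R^L(x)}\,dv_M(x)\Big)k^n,
\]
which is the required lower bound and shows that $L$ is big.

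The main obstacle is the singular $L^2$ estimate used to verify the spectral gap: because $h^L$ degenerates along $\Sigma$, the global H\"ormander estimate must be carried out with a weight that blows up, and one must guarantee that the $1/k$ gain survives with a constant independent of $k$. I would handle this by Demailly's regularization, approximating $h^L$ from above by smooth metrics $h_\epsilon$ whose curvature is bounded below by $-C\epsilon\,\Theta$, deriving the estimate uniformly in $\epsilon$ for forms supported in the fixed set $D$ on which the regularized curvature stays strictly positive, and then passing to the limit $\epsilon\to0$, where lower semicontinuity of the relevant $L^2$ norms preserves the inequality.
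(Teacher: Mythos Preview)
Your argument is correct and follows essentially the same route as the paper: verify the $O(k^{-N})$ small spectral gap on relatively compact subsets of $M\setminus\Sigma$ (the paper quotes this as Theorem~\ref{t-gue140923I} from \cite{HM12}, you sketch it via H\"ormander--Demailly estimates with regularization), apply Theorem~\ref{t-gue140923} to get the diagonal expansion, invoke the Skoda extension result (the paper's Theorem~\ref{t-gue140923III}) to pass from $L^2$ holomorphic sections on $M\setminus\Sigma$ to $H^0(M,L^k)$, and integrate over a compact $K\Subset M\setminus\Sigma$. The only organizational difference is that the paper works on the noncompact manifold $M\setminus\Sigma$ with its smooth metric and then identifies $P_{k,M\setminus\Sigma}$ with the multiplier kernel $P_{k,\cali{I}}$, whereas you phrase things directly for the singular metric on $M$; these are equivalent once Skoda's lemma is in hand.
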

$L$ is big if ${\rm dim\,}H^0(M,L^k)\approx k^n$, where $H^0(M,L^k)=\set{u\in C^\infty(M,L^k);\, \ddbar_ku=0}$. Ji and Shiffman~\cite{JS93} solved this conjecture. 

Now, we assume that $h^L$ is a singular Hermitian metric, smooth outside a proper analytic set $\Sigma$, $R^L>0$ in the sense of current. Consider the non-compact complex manifold $M\setminus\Sigma$. We also write $\ddbar_k$ to denote the Cauchy-Riemann operator on $M\setminus\Sigma$ with values in $L^k$. Let $P_{k,M\setminus\Sigma}$ be the associated Bergman projection on $M\setminus\Sigma$ and let $P_{k,M\setminus\Sigma}(x)$ be the associated Bergman kernel function. In~\cite{HM12}, we showed that 
\begin{thm}\label{t-gue140923I}
$\ddbar_k$ has $O(k^{-N})$ small spectral gap on every $D\Subset M\setminus\Sigma$.
\end{thm}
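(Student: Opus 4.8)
The plan is to recognise the small spectral gap estimate as an $L^2$-solvability statement for $\ddbar_k$ and then to produce the required solution by Hörmander--Demailly estimates on the non-compact manifold $M\setminus\Sigma$. First I would reformulate the claim: for $u\in C^\infty_0(D,L^k)$ put $g:=\ddbar_k u$, so $g$ is $\ddbar_k$-closed with ${\rm Supp\,}g\subset D$. Since $u$ is supported away from $\Sigma$ it lies in $L^2(M\setminus\Sigma,L^k)$, and for any $w\in{\rm Dom\,}\ddbar_k$ with $\ddbar_k w=g$ the difference $u-w$ lies in $\Ker\ddbar_k$; hence $P_k(u-w)=u-w$, which forces $(I-P_k)u=(I-P_k)w$ and therefore $\norm{(I-P_k)u}_{h^{L^k}}\le\norm{w}_{h^{L^k}}$. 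Thus it suffices to construct one global $L^2$ solution $w$ of $\ddbar_k w=g$ with $\norm{w}_{h^{L^k}}\le C_D\,k^N\norm{g}_{h^{L^k}}$.

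Next I would set up the geometry. Choose $D'$ with $D\Subset D'\Subset M\setminus\Sigma$; on the compact set $\overline{D'}$ the metric $h^L$ is smooth and $R^L>0$, so $R^L\ge c_0\Theta$ for some $c_0>0$ there. By Demailly's construction I would fix a complete K\"ahler metric $\omega$ on $M\setminus\Sigma$ chosen so that $\omega=\Theta$ on a neighbourhood of $\overline{D'}$ and $\omega\ge\Theta$ everywhere, by adding to $\Theta$ a positive $i\pr\ddbar$-term built from a potential blowing up along $\Sigma$ and supported away from $\overline{D'}$. Completeness of $\omega$ legitimises the a priori estimate and the density of compactly supported forms used below, while the blow-up of the singular weight $e^{-2k\phi}$ (with $\phi$ psh on $M\setminus\Sigma$) along $\Sigma$ supplies the missing integrability.

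Then I would apply the Bochner--Kodaira--Nakano--Hörmander estimate on $(M\setminus\Sigma,\omega)$ with the smooth weight $h^{L^k}$, whose Chern curvature is $kR^L\ge0$ globally. The form $g$ is supported in $D$, where $ikR^L\ge kc_0\,\omega$ dominates the bounded Ricci and torsion contributions of $\omega$ once $k$ is large; hence the curvature operator is bounded below by $\tfrac12 kc_0$ on ${\rm Supp\,}g$, and the estimate yields a solution $w$ of $\ddbar_k w=g$ with $\norm{w}_\omega^2\le\tfrac{2}{kc_0}\norm{g}_\omega^2$. Since $w$ is a section, $\norm{w}_\Theta\le\norm{w}_\omega$ because $dv_\omega\ge dv_\Theta$, and $\norm{g}_\omega=\norm{g}_\Theta$ because $\omega=\Theta$ near ${\rm Supp\,}g$; combining with the reformulation of the first paragraph gives $\norm{(I-P_k)u}_{h^{L^k}}\le\tfrac{C}{\sqrt k}\norm{\ddbar_k u}_{h^{L^k}}$, which is in particular the desired $O(k^{-N})$ small spectral gap, here even with $N=0$.

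The main obstacle is the rigorous justification of the step involving Hörmander's estimate on the incomplete base $M\setminus\Sigma$: one must construct the complete metric $\omega$ compatibly with $\Theta$ near $\overline{D'}$, guarantee via completeness the density of test forms and the basic a priori inequality, and control the positivity of the curvature operator on ${\rm Supp\,}g$ against the torsion of $\omega$ --- all while transferring the final bound back into the fixed $(\Theta,h^{L^k})$ $L^2$-structure that defines $P_k$. The behaviour of the singular weight near $\Sigma$ enters only to ensure that the relevant $L^2$ spaces are the correct ones and does not obstruct the local estimate, since $g$ lives entirely in the smooth locus where $R^L$ is strictly positive.
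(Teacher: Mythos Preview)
The paper does not supply its own proof of this statement; it simply records that the result was established in \cite{HM12}. Your strategy --- reduce the spectral-gap inequality to producing one global $L^2$ solution of $\ddbar_k w=\ddbar_k u$ with a polynomial bound, and obtain that solution from H\"ormander--Demailly estimates on the non-compact manifold $M\setminus\Sigma$ --- is exactly the method behind \cite{HM12}, and your first-paragraph reduction (that the minimal-norm solution of $\ddbar_k w=\ddbar_k u$ is $(I-P_k)u$, so any solution dominates it) is correct and is the standard opening move.

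Two technical points in your sketch deserve tightening. First, $\Theta$ in this paper is only a Hermitian $(1,1)$-form, not assumed closed; hence your hybrid metric $\omega$ (equal to $\Theta$ near $\overline{D'}$, completed near $\Sigma$) need not be K\"ahler, and the torsion term in the Bochner--Kodaira identity must then be controlled \emph{globally} on $M\setminus\Sigma$, not only on $\operatorname{Supp}g$, for the a priori inequality to go through --- and near $\Sigma$ that torsion could blow up. The clean remedy is to drop the requirement $\omega=\Theta$ on $D'$: take $\omega$ to be any complete K\"ahler metric on $M\setminus\Sigma$ (Demailly's construction), run the estimate there, and afterwards compare both $\norm{w}$ and $\norm{g}$ with the $\Theta$-norms on the compact set $\overline{D'}$, losing only a harmless constant $C_{D}$. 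Second, the H\"ormander argument needs the curvature operator to be semipositive on \emph{all} of $M\setminus\Sigma$ (for the pointwise Cauchy--Schwarz step), not just on $\operatorname{Supp}g$; this does follow, since a positive current that is smooth on an open set is represented there by a semipositive form, but it should be stated explicitly rather than folded into ``$kR^L\ge0$ globally''. With these two clarifications your outline matches the argument of \cite{HM12}, and in fact yields the sharper gap $\norm{(I-P_k)u}_{h^{L^k}}\le C_D\,k^{-1/2}\norm{\ddbar_k u}_{h^{L^k}}$.
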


From Theorem~\ref{t-gue140923I} and Theorem~\ref{t-gue140923}, we deduce that
\begin{thm}\label{t-gue140923II}
$P_{k,M\setminus\Sigma}(x)\sim(2\pi)^{-n}\big|\det R^L(x)\big|k^n+b_1(x)k^{n-1}+b_2(x)k^{n-2}+\cdots$ locally uniformly on $M\setminus\Sigma$, where $b_j(x)\in C^\infty(M\setminus\Sigma)$, $j=1,2,\ldots$. 
\end{thm}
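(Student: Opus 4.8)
The plan is to deduce the diagonal expansion by localizing on relatively compact pieces of $M\setminus\Sigma$ and applying Theorem~\ref{t-gue140923} there, with Theorem~\ref{t-gue140923I} supplying the spectral-gap hypothesis. Fix $p\in M\setminus\Sigma$. Since $\Sigma$ is a proper analytic set, $M\setminus\Sigma$ is an open complex manifold, which we equip with the restriction of a fixed positive $(1,1)$-form on the compact manifold $M$; choose a coordinate patch $D$ with $p\in D\Subset M\setminus\Sigma$ and a local trivializing section $s$ of $L$ on $D$ with $\abs{s}^2_{h^L}=e^{-2\phi}$, $\phi\in C^\infty(D,\Real)$. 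On $D$ the metric $h^L$ is smooth, so $R^L|_D=2\pr\ddbar\phi$ is a smooth $(1,1)$-form; the assumption $R^L>0$ in the sense of current then forces $R^L$ to be strictly positive on $D$ in the pointwise sense, so its eigenvalues $\lambda_1(x),\dots,\lambda_n(x)$ are positive there.

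Next I would check the two hypotheses of Theorem~\ref{t-gue140923} on $D$, regarding $M\setminus\Sigma$ as the (non-compact) ambient manifold to which that theorem applies. Positivity of $R^L$ on $D$ has just been established. The $O(k^{-N})$ small spectral gap of Definition~\ref{s1-d2bis}, namely $\norm{(I-P_{k,M\setminus\Sigma})u}_{h^{L^k}}\leq C_D\,k^N\norm{\ddbar_ku}_{h^{L^k}}$ for $u\in C^\infty_0(D,L^k)$, is exactly the conclusion of Theorem~\ref{t-gue140923I}. Hence Theorem~\ref{t-gue140923} applies and gives, on $D$,
\[P_{k,s}(x,y)\equiv e^{ik\Psi(x,y)}b(x,y,k)\mod O(k^{-\infty}),\]
with $b(x,y,k)\sim\sum_{j=0}^\infty b_j(x,y)k^{n-j}$ in the sense of Definition~\ref{d-gue140826}, $b_j\in C^\infty(D\times D)$, and $b_0(x,x)=(2\pi)^{-n}\abs{\det R^L(x)}$.

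To obtain the stated expansion I would restrict to the diagonal $x=y$. By \eqref{prop_psi} one has $\Psi(x,x)=0$, so $e^{ik\Psi(x,x)}=1$, while \eqref{e-gue140923} identifies $P_{k,M\setminus\Sigma}(x)=P_{k,s}(x,x)$. Therefore on $D$
\[P_{k,M\setminus\Sigma}(x)\sim\sum_{j=0}^\infty b_j(x,x)k^{n-j},\]
with leading term $(2\pi)^{-n}\abs{\det R^L(x)}k^n$. Since the relation $\sim$ of Definition~\ref{d-gue140826} is defined through bounds that are uniform on compact subsets of $D$, this expansion holds locally uniformly on $D$.

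It remains to globalize. As $p$ was arbitrary, such a $D$ can be placed around every point of $M\setminus\Sigma$; and since $P_{k,s}(x,x)=P_{k,M\setminus\Sigma}(x)$ is independent of the chosen trivializing section $s$ (as noted in the text preceding Theorem~\ref{t-gue140923}), the local expansions must agree on overlaps. By uniqueness of the asymptotic sum up to $S^{-\infty}_{{\rm loc}}(1)$ the coefficients $b_j(x,x)$ patch to well-defined functions $b_j(x)\in C^\infty(M\setminus\Sigma)$, and the expansion holds locally uniformly on all of $M\setminus\Sigma$. I expect no computational difficulty here, since the analytic substance is already packaged in Theorems~\ref{t-gue140923} and~\ref{t-gue140923I}; the only point requiring care is conceptual, namely recognizing that the $O(k^{-N})$ small spectral gap is exactly the local surrogate for global spectral control that lets the microlocal machinery behind Theorem~\ref{t-gue140923} run verbatim on each $D$, despite the non-compactness of $M\setminus\Sigma$ and the degeneration of $h^L$ along $\Sigma$.
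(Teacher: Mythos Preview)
Your proposal is correct and follows precisely the paper's own approach: the paper simply states that Theorem~\ref{t-gue140923II} is deduced from Theorem~\ref{t-gue140923I} (which supplies the $O(k^{-N})$ small spectral gap on every $D\Subset M\setminus\Sigma$) together with Theorem~\ref{t-gue140923} (which then yields the local asymptotic expansion). You have merely written out in detail the localization, diagonal restriction, and patching that the paper leaves implicit.
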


Let $\set{g_1,g_2,\ldots,g_{m_k}}$ be an orthonormal frame for $H^0(M,L^k)\bigcap L^2(M\setminus\Sigma,L^k)$. The multiplier Bergman kernel function is defined by 
\[P_{k,\cali{I}}(x):=\sum^{m_k}_{j=1}\abs{g_j(x)}^2_{h^{L^k}},\ \ x\in M\setminus\Sigma.\] 
The following result is essentially due o Skoda(see Demailly~\cite[Lemma\,7.3,\,Ch.\,VIII]{De:11}).
\begin{thm}\label{t-gue140923III}
$P_{k,M\setminus\Sigma}(x)=P_{k,\cali{I}}(x)$, $\forall x\in M\setminus\Sigma$.
\end{thm}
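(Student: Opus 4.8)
The plan is to reduce the asserted identity of the two Bergman kernel functions to an identity of the two underlying Hilbert spaces of sections, and then to establish the latter by an $L^2$ removable-singularity argument across $\Sigma$. Recall from~\eqref{e-gue140923} that for any orthonormal frame $\{f_j\}$ of ${\rm Ker\,}\ddbar_k\subset L^2(M\setminus\Sigma,L^k)$ one has $P_{k,M\setminus\Sigma}(x)=\sum_j|f_j(x)|^2_{h^{L^k}}$, and that this diagonal value is independent of the chosen frame, being the restriction to the diagonal of the reproducing kernel of the closed subspace ${\rm Ker\,}\ddbar_k$ (point evaluations are continuous by the sub-mean-value property of holomorphic sections). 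The same remark applies to $P_{k,\cali{I}}(x)$, which is built from an orthonormal frame of $H^0(M,L^k)\cap L^2(M\setminus\Sigma,L^k)$, both spaces carrying the inner product $(\,\cdot\,|\,\cdot\,)_{h^{L^k}}$ inherited from $L^2(M\setminus\Sigma,L^k)$. Hence it suffices to prove that, as subspaces of $L^2(M\setminus\Sigma,L^k)$,
\[ {\rm Ker\,}\ddbar_k=H^0(M,L^k)\cap L^2(M\setminus\Sigma,L^k). \]

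One inclusion is immediate: every $g\in H^0(M,L^k)$ lying in $L^2(M\setminus\Sigma,L^k)$ restricts to a holomorphic, hence $\ddbar_k$-closed, $L^2$ section on $M\setminus\Sigma$, so it belongs to ${\rm Ker\,}\ddbar_k$. For the reverse inclusion I would take $u\in{\rm Ker\,}\ddbar_k$; by interior elliptic regularity for $\ddbar$ on the open manifold $M\setminus\Sigma$, $u$ is smooth and genuinely holomorphic there, and the whole point is to show that $u$ extends holomorphically across $\Sigma$ to an element of $H^0(M,L^k)$.

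The extension is a local question near a point $p\in\Sigma$. I would fix a local trivializing section $s$ of $L$ on a coordinate patch $D\ni p$, write $u=s^kf$ with $f$ holomorphic on $D\setminus\Sigma$, so that $|u|^2_{h^{L^k}}=|f|^2e^{-2k\phi}$ and the global $L^2$ hypothesis yields $\int_D|f|^2e^{-2k\phi}\,dv_M<\infty$. Since $R^L>0$ forces the local weight $\phi$ to be plurisubharmonic, $\phi$ is locally bounded above, so $e^{-2k\phi}$ is bounded below by a positive constant on compact subsets of $D$; as $dv_M$ is a smooth volume form, this gives $f\in L^2_{\rm loc}(D)$ for Lebesgue measure. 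The key step is then the $L^2$ removable-singularity theorem for analytic sets, the Skoda-type result of Demailly~\cite[Lemma\,7.3,\,Ch.\,VIII]{De:11}: a holomorphic function on the complement of an analytic subset that is locally square-integrable extends holomorphically across it. This produces a holomorphic extension of $f$ across $\Sigma\cap D$; since the transition functions of $L^k$ are holomorphic and nowhere vanishing, these local extensions patch to a global holomorphic section $\Td u\in H^0(M,L^k)$ with $\Td u=u$ on $M\setminus\Sigma$. Because $\Sigma$ has measure zero, $\Td u$ has the same $L^2(M\setminus\Sigma,L^k)$ norm as $u$ and hence lies in $H^0(M,L^k)\cap L^2(M\setminus\Sigma,L^k)$, which closes the reverse inclusion.

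With the two subspaces identified, their reproducing kernels coincide, and evaluation on the diagonal gives $P_{k,M\setminus\Sigma}(x)=P_{k,\cali{I}}(x)$ for every $x\in M\setminus\Sigma$. The only non-formal ingredient is the removable-singularity step, which I expect to be the crux of the argument; everything else is bookkeeping once one observes that the singular weight $e^{-2k\phi}$ can only strengthen the local integrability of $f$ (through the upper boundedness of the plurisubharmonic weight), so that the classical extension theorem applies without modification. I would therefore present the reduction to the equality of subspaces first, and isolate the extension lemma as the single analytic input.
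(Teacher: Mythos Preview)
Your proposal is correct and matches the paper's approach: the paper does not spell out a proof but simply attributes the result to Skoda via Demailly~\cite[Lemma\,7.3,\,Ch.\,VIII]{De:11}, and your argument is precisely the natural unpacking of that citation---reduce to the equality ${\rm Ker\,}\ddbar_k=H^0(M,L^k)\cap L^2(M\setminus\Sigma,L^k)$, then invoke the $L^2$ removable-singularity lemma across $\Sigma$ after observing that the psh weight $\phi$ is locally bounded above so that $e^{-2k\phi}$ is locally bounded below. Nothing is missing.
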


\begin{proof}[Proof of Shiffman conjecture]
From Theorem~\ref{t-gue140923III} and Theorem~\ref{t-gue140923II}, we establish Bergman kernel asymptotic expansion for big line bundle: 
\begin{equation}\label{e-gue140923bI}
\mbox{$P_{k,\cali{I}}(x)\sim(2\pi)^{-n}\big|\det R^L(x)\big|k^n+b_1(x)k^{n-1}+b_2(x)k^{n-2}+\cdots$ locally uniformly on $M\setminus\Sigma$},
\end{equation}
where $b_j(x)\in C^\infty(M\setminus\Sigma)$, $j=1,2,\ldots$. Let $K\Subset M\setminus\Sigma$. Note that ${\rm dim\,}H^0(M,L^k)\geq\int_KP_{k,\cali{I}}(x)dv_M(x)$. From this observation and \eqref{e-gue140923bI}, we reprove Shiffman conjecture.
\end{proof}

\section{A Bergman kernel proof of Kodaira embedding theorem}\label{s-gue140923a}

For a holomorphic line bundle $E\To M$, we say that $E$ is positive if there is a Hermitian metric $h^E$ of $E$ such that the associated curvature $R^E$ is positive definite on $M$. Let's recall Kodaira embedding theorem first. 

\begin{thm}\label{t-gue140925}
Let $M$ be a compact complex manifold. If there is a positive holomorphic line bundle $E$ over $M$, then $M$ can be holomorphic embedded into $\Complex\mathbb P^N$, for some $N\in\mathbb N$.
\end{thm}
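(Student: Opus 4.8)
The plan is to show that, for all large $k$, the Kodaira map attached to $E^k$ is an injective holomorphic immersion of $M$; since $M$ is compact, an injective immersion into the Hausdorff space $\Complex\mathbb P^{N}$ is automatically an embedding, so fixing one such $k$ and setting $N=d_k-1$ will prove the theorem. First I would fix a metric $h^E$ with $R^E>0$ and take $\Theta$ to be the associated K\"ahler form, so that $M$ becomes compact K\"ahler and $L:=E$ is positive at every point. On a compact manifold a positive line bundle has a genuine spectral gap: the smallest nonzero eigenvalue of the Kodaira Laplacian on $L^2(M,E^k)$ is $\gtrsim k$, a classical consequence of the Bochner--Kodaira--Nakano inequality, whence $\norm{(I-P_k)u}_{h^{E^k}}\leq Ck^{-1/2}\norm{\ddbar_ku}_{h^{E^k}}$. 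In particular $\ddbar_k$ has $O(k^{-N})$ small spectral gap on every coordinate patch and Theorem~\ref{t-gue140923} applies throughout $M$. With $d_k=\dim H^0(M,E^k)<\infty$ and an orthonormal basis $f_1,\dots,f_{d_k}$ of $H^0(M,E^k)$, the candidate is
\[
\Phi_k\colon M\To\Complex\mathbb P^{d_k-1},\qquad x\mapsto[f_1(x):\cdots:f_{d_k}(x)],
\]
read off in any local trivialization from the holomorphic coefficients of the $f_j$.

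Next I would check that $\Phi_k$ is a well-defined holomorphic map and translate injectivity into a kernel inequality. Theorem~\ref{t-gue140923} gives $P_k(x)\sim(2\pi)^{-n}\abs{\det R^E(x)}k^n+\cdots$ uniformly, and since $R^E>0$ on the compact $M$ the leading term is bounded below, so $P_k(x)=\sum_j\abs{f_j(x)}^2_{h^{E^k}}>0$ for large $k$: the $f_j$ have no common zero and $\Phi_k$ is base-point free. The key bookkeeping is \eqref{e-gue140923}: in a trivialization $s$ the localized kernel $P_{k,s}(x,y)=\sum_j\Td f_j(x)\ol{\Td f_j}(y)$ is the Hermitian inner product of the weighted evaluation vectors $V(x)=(\Td f_1(x),\dots,\Td f_{d_k}(x))$, so that $\abs{V(x)}^2=P_k(x)$ and Cauchy--Schwarz yields
\[
\abs{P_{k,s}(x,y)}^2\leq P_k(x)P_k(y),
\]
with equality precisely when $V(x)$ and $V(y)$ are proportional, i.e. when $\Phi_k(x)=\Phi_k(y)$. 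Thus injectivity of $\Phi_k$ is equivalent to strictness of this inequality for every $x\neq y$.

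To see that $\Phi_k$ is an immersion I would pull back the Fubini--Study form. Writing the homogeneous coordinates through local holomorphic representatives $g_j=e^{k\phi}\Td f_j$ and using $\sum_j\abs{g_j}^2=e^{2k\phi}P_k$, one computes
\[
\Phi_k^*\omega_{FS}=\frac{i}{2\pi}\pr\ddbar\log\Big(\sum_j\abs{g_j}^2\Big)=\frac{ik}{\pi}\pr\ddbar\phi+\frac{i}{2\pi}\pr\ddbar\log P_k.
\]
The first summand is $k$ times the fixed positive form $\frac{i}{\pi}\pr\ddbar\phi$ (recall $R^E=2\pr\ddbar\phi$ is positive), while the $C^\infty$ diagonal expansion of Theorem~\ref{t-gue140923} forces $\frac{i}{2\pi}\pr\ddbar\log P_k$ to converge, hence to stay uniformly bounded. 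Therefore $\Phi_k^*\omega_{FS}>0$ for large $k$; since $\omega_{FS}$ is positive this rules out any kernel of $d\Phi_k$, so $\Phi_k$ is an immersion.

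It remains to establish the strict inequality globally, and this is where I expect the real work. In the far regime, $x$ and $y$ in disjoint coordinate balls, the statement $\chi_1P_k\chi\equiv0\mod O(k^{-\infty})$ of Theorem~\ref{t-gue140923} gives $\abs{P_{k,s}(x,y)}^2=O(k^{-\infty})$, negligible against $P_k(x)P_k(y)\geq c\,k^{2n}$. In the near regime the local form $P_{k,s}(x,y)\equiv e^{ik\Psi(x,y)}b(x,y,k)$ together with ${\rm Im\,}\Psi\geq c\abs{x-y}^2$ gives $\abs{P_{k,s}(x,y)}\leq Ck^ne^{-ck\abs{x-y}^2}$; for $\abs{x-y}^2\gtrsim(\log k)/k$ this exponential already beats $\sqrt{P_k(x)P_k(y)}\asymp k^n$, so strictness holds. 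The delicate band is the critical scale $\abs{x-y}\lesssim k^{-1/2}$, where the exponential factor is only $O(1)$; there I would rescale $x=x_*+u/\sqrt k$, $y=x_*+v/\sqrt k$ and compare, via the explicit near-diagonal shape \eqref{e-gue140923f} of $\Psi$ and $b_0(x_*,x_*)>0$, with the Bargmann--Fock model kernel, for which Cauchy--Schwarz is strict off the diagonal. To make the three regimes cohere uniformly I would argue by contradiction: an equality $\abs{P_{k,s}(x_k,y_k)}^2=P_k(x_k)P_k(y_k)$ with $x_k\neq y_k$ yields, after passing to a convergent subsequence, either distinct limits (excluded by the far estimate) or a common limit (excluded by the model comparison at scale $k^{-1/2}$). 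The main obstacle is exactly this uniform near-diagonal separation at $\abs{x-y}\sim k^{-1/2}$: there crude size bounds fail and one must extract strictness from the precise quadratic geometry of $\Psi$, gluing the rescaled model estimate to the exponential-decay regime. Granting it, $\Phi_k$ is an injective immersion of the compact $M$, hence an embedding into $\Complex\mathbb P^{d_k-1}$.
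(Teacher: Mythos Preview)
Your overall architecture is sound and in places cleaner than the paper's. For the immersion you invoke $\Phi_k^*\omega_{FS}=\frac{k}{2\pi}\,iR^L+\frac{i}{2\pi}\pr\ddbar\log P_k$, with the second term uniformly bounded by the $C^\infty$ diagonal expansion; this is correct and more economical than the paper's route, which instead manufactures explicit peak sections $u_k=P_k\bigl(s^ke^{k\phi}\chi(\sqrt{k}\,\cdot)\bigr)$ and $u^j_k=P_k\bigl(s^ke^{k\phi}\sqrt{k}\,z_j\,\chi(\sqrt{k}\,\cdot)\bigr)$ and verifies by hand that the Jacobian of $(u^1_k/u_k,\dots,u^n_k/u_k)$ is nondegenerate at $p$. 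Your Cauchy--Schwarz reformulation of injectivity is also the same object the paper manipulates: the section $v_k(x)=\sum_j f_j(x)\ol{\Td f_j(y_k)}$ used there is exactly $P_{k,s}(\cdot,y_k)$, and your ratio $\abs{P_{k,s}(x,y)}^2/\bigl(P_k(x)P_k(y)\bigr)$ is the paper's function $f_k$ evaluated at the endpoints of the segment.

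The one place your outline does not close is precisely the regime you flag: $x_k,y_k\to p$ with $\sqrt{k}\,\abs{x_k-y_k}\to 0$. Rescaling to the Bargmann--Fock model then sends both points to the same limit, so the strict off-diagonal Cauchy--Schwarz of the model yields nothing, and the bound $e^{-ck\abs{x-y}^2}$ is equally useless. The paper resolves this (its Case~II) by a second-derivative trick rather than a model comparison: with $f_k(t)=\abs{P_{k,s}(tx_k+(1-t)y_k,\,y_k)}^2\big/\bigl(P_k(tx_k+(1-t)y_k)\,P_k(y_k)\bigr)$ one has $0\leq f_k\leq 1$ and, by the assumed coincidence $\Phi_k(x_k)=\Phi_k(y_k)$, $f_k(0)=f_k(1)=1$; hence some $t_k\in[0,1]$ has $f_k''(t_k)\geq 0$. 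A direct computation from the phase--amplitude form of $P_{k,s}$ shows the dominant contribution to $f_k''(t_k)$ is $\langle -2k\,{\rm Im\,}\Psi''_x\,(x_k-y_k),\,x_k-y_k\rangle\cdot\abs{b}^2\leq -c_0\,k\abs{x_k-y_k}^2\abs{b_0(p,p)}^2$, while every other term is $o\bigl(k\abs{x_k-y_k}^2\bigr)$ once $\sqrt{k}\,\abs{x_k-y_k}\to 0$ (in particular the first-derivative cross terms, which carry an extra factor $k\abs{x_k-y_k}^2\to 0$). Thus $\liminf_{k\to\infty} f_k''(t_k)/\bigl(k\abs{x_k-y_k}^2\bigr)<0$, contradicting $f_k''(t_k)\geq 0$. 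This is exactly the ``precise quadratic geometry of $\Psi$'' you allude to, made explicit; adding this computation --- or, equivalently, extracting from your bound $\Phi_k^*\omega_{FS}\geq ck\,\Theta$ together with uniform $C^2$ control of $\Phi_k$ at scale $k^{-1/2}$ a quantitative local injectivity radius $\gtrsim k^{-1/2}$ --- is what is needed to finish your argument.
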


We return to our situation and we will use the same notations as before.
By using H\"ormander's $L^2$ estimates~\cite{Hor90}, it is easy to see that if $M$ is compact and $R^L$ is positive on $M$ then $\ddbar_k$ has $O(k^{-N})$ small spectral gap on $M$. From this observation and Theorem~\ref{t-gue140923}, we deduce 

\begin{thm}\label{t-gue140923a}
Assume that $M$ is compact and $R^L$ is positive on $M$. Then, 
\begin{equation}\label{e-gue140923ab}
\chi_1P_k\chi\equiv0\mod O(k^{-\infty})
\end{equation}
for every $\chi_1\in C^\infty(M)$, $\chi\in C^\infty(M)$ with ${\rm Supp\,}\chi_1\bigcap{\rm Supp\,}\chi=\emptyset$.
Let $s$ be a local trivializing section of $L$ on an open set $D\subset M$, $\abs{s}^2_{h^L}=e^{-2\phi}$, then
\begin{equation}\label{e-gue140923abI}
\mbox{$P_{k,s}(x,y)\equiv e^{ik\Psi(x,y)}b(x,y,k)\mod O(k^{-\infty})$ on $D$},
\end{equation}
where $b(x,y,k)$ and $\Psi(x,y)$ are as in Theorem~\ref{t-gue140923}. 

In particular, 
\begin{equation}\label{e-gue140923abII}
\mbox{$P_k(x)\sim(2\pi)^{-n}\big|\det R^L(x)\big|k^n+b_1(x)k^{n-1}+b_2(x)k^{n-2}+\cdots$ uniformly on $M$}.
\end{equation}
\end{thm}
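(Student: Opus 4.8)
The plan is to reduce the entire statement to Theorem~\ref{t-gue140923} by verifying that its only hypothesis, the $O(k^{-N})$ small spectral gap, holds on all of $M$. First I would establish this gap through a Bochner--Kodaira--Nakano estimate. Write $\Box^{(0)}_k=\ddbar^*_k\ddbar_k$ and $\Box^{(1)}_k=\ddbar_k\ddbar^*_k+\ddbar^*_k\ddbar_k$ for the Kodaira Laplacians on $L^k$-valued $(0,0)$- and $(0,1)$-forms. Since $M$ is compact and $R^L>0$, Hörmander's $L^2$ estimates supply constants $c>0$ and $C>0$, independent of $k$, with
\[
\norm{\ddbar_k w}^2_{h^{L^k}}+\norm{\ddbar^*_k w}^2_{h^{L^k}}\geq (ck-C)\norm{w}^2_{h^{L^k}},\qquad w\in\Omega^{0,1}(M,L^k).
\]
I would then transfer this lower bound to $\Box^{(0)}_k$ as follows. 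If $\lambda\neq0$ is an eigenvalue of $\Box^{(0)}_k$ with eigensection $v_\lambda$, then $w_\lambda:=\ddbar_kv_\lambda\neq0$ satisfies $\ddbar_kw_\lambda=0$ and $\Box^{(1)}_kw_\lambda=\lambda w_\lambda$, so the inequality above forces $\lambda\geq ck-C$. Hence for $u\in C^\infty_0(D,L^k)$ the section $v=(I-P_k)u\perp\Ker\ddbar_k$ obeys $\norm{\ddbar_kv}^2_{h^{L^k}}=(\Box^{(0)}_kv\,|\,v)_{h^{L^k}}\geq(ck-C)\norm{v}^2_{h^{L^k}}$, and since $\ddbar_kv=\ddbar_ku$,
\[
\norm{(I-P_k)u}_{h^{L^k}}\leq (ck-C)^{-1/2}\norm{\ddbar_ku}_{h^{L^k}}\leq C_D\norm{\ddbar_ku}_{h^{L^k}}
\]
for $k$ large. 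This is the $O(k^{-N})$ small spectral gap on $M$ (indeed with $N=0$), and it plainly restricts to every open $D\subset M$.

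Second, I would invoke Theorem~\ref{t-gue140923} directly. For any point of $M$ pick a coordinate patch $D$ with a local trivializing section $s$; since $R^L>0$ on $D$ and $\ddbar_k$ has $O(k^{-N})$ small spectral gap on $D$, the theorem yields at once the local representation \eqref{e-gue140923abI}, namely $P_{k,s}(x,y)\equiv e^{ik\Psi(x,y)}b(x,y,k)\mod O(k^{-\infty})$, with $b$ and $\Psi$ exactly as recorded there. To obtain the global off-diagonal estimate \eqref{e-gue140923ab}, I would use that in Theorem~\ref{t-gue140923} the first cutoff $\chi_1$ ranges over all of $C^\infty_0(M)$ while only $\chi$ is confined to the patch. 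Thus, given $\chi_1,\chi\in C^\infty(M)$ with ${\rm Supp\,}\chi_1\cap{\rm Supp\,}\chi=\emptyset$, I would cover the compact set ${\rm Supp\,}\chi$ by finitely many patches $D_1,\ldots,D_m$ and choose a subordinate partition of unity $\psi_1,\ldots,\psi_m$, so that each $\chi\psi_j\in C^\infty_0(D_j)$ with ${\rm Supp\,}\chi_1\cap{\rm Supp\,}(\chi\psi_j)=\emptyset$. Theorem~\ref{t-gue140923} gives $\chi_1P_k(\chi\psi_j)\equiv0\mod O(k^{-\infty})$ for each $j$, and summing the finitely many terms produces \eqref{e-gue140923ab}.

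Finally, the diagonal expansion \eqref{e-gue140923abII} follows by setting $x=y$ in \eqref{e-gue140923abI}: since $\Psi(x,x)=0$ we have $e^{ik\Psi(x,x)}=1$, and with $b_0(x,x)=(2\pi)^{-n}\abs{\det R^L(x)}$ this gives $P_k(x)\sim(2\pi)^{-n}\abs{\det R^L(x)}k^n+b_1(x)k^{n-1}+\cdots$ on each patch; a finite cover of the compact manifold $M$ upgrades this to the uniform statement. Because Theorem~\ref{t-gue140923} is granted, the substantial analysis is already in hand, so I expect the only step demanding real care to be the first one: securing the Bochner--Kodaira--Nakano inequality with a growth constant $ck-C$ whose leading coefficient $c>0$ is independent of $k$, and cleanly transferring this lower bound from $\Box^{(1)}_k$ to the nonzero spectrum of $\Box^{(0)}_k$. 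The globalization of the off-diagonal estimate and the passage to uniform diagonal asymptotics are then routine partition-of-unity bookkeeping using the compactness of $M$.
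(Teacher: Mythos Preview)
Your proposal is correct and follows essentially the same route as the paper: the paper simply remarks that H\"ormander's $L^2$ estimates yield the $O(k^{-N})$ small spectral gap on the compact manifold when $R^L>0$, and then invokes Theorem~\ref{t-gue140923}. You have filled in exactly these details (the Bochner--Kodaira--Nakano bound, the transfer to the nonzero spectrum of $\Box^{(0)}_k$, and the partition-of-unity globalization), so there is nothing to add.
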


By using Theorem~\ref{t-gue140923}, we are going to give a Bergman kernel proof of Kodaira embedding theorem. 
From now on, we assume that $R^L$ is positive on $M$.
As before, put 
\[H^0(M,L^k):=\set{u\in C^\infty(M,L^k);\, \ddbar_ku=0}\] 
and let $\set{f_1,\ldots,f_{d_k}}$ be an orthonormal basis for $H^0(M,L^k)$ with respect to $(\,\cdot\,|\,\cdot\,)_{h^{L^k}}$. The Kodaira map is given by
\begin{equation}\label{e-gue140923abIII}
\Phi_k:x\in X\To[f_1(x),f_2(x),\ldots,f_{d_k}(x)]\in\Complex\mathbb P^{d_k-1}.
\end{equation}
From \eqref{e-gue140923abII}, we see that there is a $k_0>0$ such that for every $k\geq k_0$, $\sum\limits^{d_k}_{j=1}\abs{f_j(x)}^2_{h^{L^k}}\geq ck^n$ on $M$, where $c>0$ is a constant independent of $k$. Hence, fix any $k\geq k_0$, for every $x\in X$, there is a $f_j$, $j\in\set{1,2,\ldots,d_k}$, such that $\abs{f_j(x)}^2_{h^{L^k}}>0$. We conclude that $\Phi_k$ is a well-defined as a smooth map from $X$ to $\Complex\mathbb P^{d_k-1}$. We will prove 

\begin{thm}\label{t-gue140923ab}
For $k$ large, $\Phi_k$ is a holomorphic embedding.
\end{thm}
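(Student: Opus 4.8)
The plan is to show that $\Phi_k$ is an injective immersion; since $M$ is compact, this already gives a holomorphic embedding. The entire argument rests on the off-diagonal asymptotics $P_{k,s}(x,y)\equiv e^{ik\Psi(x,y)}b(x,y,k)\mod O(k^{-\infty})$ from Theorem~\ref{t-gue140923a}, together with the properties \eqref{prop_psi} of the phase $\Psi$, in particular ${\rm Im\,}\Psi\geq c\abs{x-y}^2$ and $\Psi(x,y)=0\Leftrightarrow x=y$. The key translation to keep in mind is that the Kodaira map records, up to scale, the vector $(f_1(x),\ldots,f_{d_k}(x))$, and that inner products of such evaluation vectors are exactly values of the Bergman kernel: $\sum_j f_j(x)\ol{f_j(y)}$ is $P_k(x,y)$, whose localized version is $e^{ik\Psi(x,y)}b(x,y,k)$. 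So injectivity and the immersion property can both be read off from the near-diagonal behaviour of the kernel.

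First I would prove injectivity. Fix distinct points $p\neq q$ in $M$. Saying $\Phi_k(p)=\Phi_k(q)$ means the two evaluation vectors are proportional, which by Cauchy--Schwarz is equivalent to the equality case $\abs{P_k(p,q)}^2=P_k(p)P_k(q)$. Using a common trivializing section and \eqref{e-gue140923abI}, the right-hand side is $\asymp b_0(p,p)b_0(q,q)k^{2n}$ while the left-hand side carries the factor $\abs{e^{ik\Psi(p,q)}}^2=e^{-2k\,{\rm Im\,}\Psi(p,q)}$, which decays because ${\rm Im\,}\Psi(p,q)>0$ whenever $p\neq q$. Hence for $p,q$ at a fixed positive distance the ratio $\abs{P_k(p,q)}^2/(P_k(p)P_k(q))$ tends to $0$, forcing $\Phi_k(p)\neq\Phi_k(q)$ for $k$ large. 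To upgrade this to a \emph{uniform} $k_0$ working for all pairs simultaneously, I would split into a far regime $\abs{p-q}\geq\varepsilon$, handled by the global estimate \eqref{e-gue140923ab} and compactness, and a near regime, where I use the quadratic lower bound ${\rm Im\,}\Psi\geq c\abs{p-q}^2$ to get decay that beats the polynomial $k^{2n}$; compactness of $M$ and of the complement of the diagonal neighbourhood then yields a single $k_0$.

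Next I would prove that $\Phi_k$ is an immersion, i.e.\ $d\Phi_k$ is injective at every point. Working in local coordinates $z$ at $p$ and a local trivialization, the differential of $\Phi_k$ in homogeneous coordinates is controlled by the first-order Taylor data of the $f_j$ at $p$, and the relevant quantity is the matrix $\partial_{z_a}\partial_{\ol w_b}$ applied to $P_{k,s}(z,w)$ evaluated on the diagonal, minus the rank-one contribution coming from the base point. Differentiating $e^{ik\Psi}b$ brings down factors of $k\,\partial_{z}\Psi$ and $k\,\partial_{\ol w}\Psi$; from \eqref{e-gue140923f} one computes that the leading $k$-power of this Gram-type matrix is governed by the Hessian $\big(\partial_{z_a}\partial_{\ol z_b}\phi\big)(0)$, which is positive definite precisely because $R^L=2\partial\ddbar\phi$ is positive on $M$. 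Thus after extracting the correct power of $k$ the obstruction matrix is positive definite for $k$ large, so $d\Phi_k$ has full rank; again compactness of $M$ gives a uniform $k_0$.

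The main obstacle I expect is not any single computation but making the passage from pointwise asymptotics to \emph{uniform-in-$(x,y)$} control honest, so that one genuinely extracts one threshold $k_0$ valid on all of $M$ rather than a $k_0$ depending on the point or the pair. Concretely, the delicate point is the near-diagonal regime for both injectivity and immersion: there the Cauchy--Schwarz defect and the derivative Gram matrix are differences of quantities that are each of size $k^{2n}$ (resp.\ $k^{2n+2}$), so I must track the subleading structure carefully and use the quadratic estimate ${\rm Im\,}\Psi\geq c\abs{x-y}^2$ and the explicit phase expansion \eqref{e-gue140923f} to see that the leading terms cancel in exactly the way that exposes the positive curvature. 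Once that cancellation and the resulting positivity are established uniformly via the compactness of $M$, injectivity and the immersion property combine to give the embedding.
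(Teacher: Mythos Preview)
Your plan is sound and converges on the same core invariant as the paper---the Cauchy--Schwarz ratio $\abs{P_{k,s}(x,y)}^2/(P_k(x)P_k(y))$---but the execution differs in two places. For the immersion, the paper does not differentiate the kernel on the diagonal; it instead manufactures explicit peak sections $u_k=P_k\bigl(s^ke^{k\phi}\chi(\sqrt{k}y)\bigr)$ and $u^j_k=P_k\bigl(s^ke^{k\phi}\sqrt{k}(y_{2j-1}+iy_{2j})\chi(\sqrt{k}y)\bigr)$ (Lemmas~\ref{l-gue140924} and~\ref{l-gue140924I}), reads off their values and first derivatives at $p$ from the asymptotics, and deduces that $d\Phi_k(p)$ is injective from just these $n+1$ sections. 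For global injectivity the paper argues by contradiction along sequences $x_k\neq y_k$: the case $x_k\to p\neq q\leftarrow y_k$ is dispatched by the peak section at $q$; the case $\limsup\sqrt{k}\abs{x_k-y_k}>0$ uses the exponential damping $e^{-2k\,{\rm Im\,}\Psi}$ exactly as you propose; and the case $\sqrt{k}\abs{x_k-y_k}\to 0$ is handled by a second-derivative test on $f_k(t)=\abs{P_{k,s}(tx_k+(1-t)y_k,y_k)}^2/(P_k(tx_k+(1-t)y_k)P_k(y_k))$, where $f_k(0)=f_k(1)=1$ forces $f''_k(t_k)\geq 0$ at some $t_k$, contradicting the direct computation $\liminf f''_k(t_k)/(k\abs{x_k-y_k}^2)<0$ coming from ${\rm Im\,}\Psi''_x>0$. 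Your Gram-matrix route to the immersion is cleaner and equally valid; your direct-estimate route to injectivity is fine too, but one phrase needs correcting: in the near regime there is no ``polynomial $k^{2n}$ to beat''---numerator and denominator scale identically, and when $\sqrt{k}\abs{p-q}\to 0$ the factor $e^{-2kc\abs{p-q}^2}\to 1$, so the strict Cauchy--Schwarz defect must be extracted from the second-order expansion of $\Psi$, precisely the mechanism you name later and the one the paper's Case~II makes explicit.
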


It is clearly that Kodaira embedding theorem follows from Theorem~\ref{t-gue140923ab}. We recall that for a smooth map $\Phi:X\To\Complex\mathbb P^{N}$ is an embedding if $d\Phi_x:TX\To T\Complex\mathbb P^N$ is injective at each point $x\in X$ and $\Phi:X\To\Complex\mathbb P^{N}$ is globally injective. 

Let $s$ be a local trivializing section of $L$ on an open set $D\subset M$. Fix $p\in D$ and let $z=(z_1,\ldots,z_n)=x=(x_1,\ldots,x_{2n})$, $z_j=x_{2j-1}+ix_{2j}$, $j=1,\ldots,n$, be local holomorphic coordinates of $X$ defined in some small neighbourhood of $p$ such that 
\begin{equation}\label{e-gue140923fI}
\phi(z)=\sum\limits^n_{j=1}\lambda_j\abs{z_j}^2+O(\abs{z}^3),
\end{equation}
where $2\lambda_1,\ldots,2\lambda_n$ are the eigenvalues of $R^L(p)$ with respect to $\langle\,\cdot\,|\,\cdot\rangle$. We may assume that the local coordinates $z$ defined on $D$. We also write $y=(y_1,\ldots,y_{2n})$. Until further notice, we work on $D$. Take $\chi\in C^\infty_0(\Real,[0,1])$ with $\chi(x)=1$ on $[-\frac{1}{2},\frac{1}{2}]$, $\chi(x)=0$ on $]-\infty,-1]\bigcup[1,\infty[$ and $\chi(t)=\chi(-t)$, for every $t\in\Real$. Let 
\begin{equation}\label{e-gue140923fII}
u_{k}:=P_k\Bigr(s^ke^{k\phi}\chi(\sqrt{k}y_1)\cdots\chi(\sqrt{k}y_{2n})\Bigr)\in H^0(M,L^k).
\end{equation}
On $D$, we write $u_{k}=s^ke^{k\phi}\Td u_{k}$, $\Td u_{k}\in C^\infty(D)$. Then, $\abs{u_{k}(x)}^2_{h^{L^k}}=\abs{\Td u_{k}(x)}^2$, $\forall x\in D$. We need

\begin{lem}\label{l-gue140924}
With the notations used above, there is a $k_0>0$ independent of $k$ and the point $p$ such that for all $k\geq k_0$,
\begin{equation}\label{e-gue140924}
\abs{u_k(p)}^2_{h^{L^k}}\geq c_0,
\end{equation}
\begin{equation}\label{e-gue140924I}
\abs{u_k(x)}^2_{h^{L^k}}\leq\frac{1}{c_0k},\ \ \forall x\notin D
\end{equation}
and
\begin{equation}\label{e-gue140924ba}
\abs{\frac{1}{\sqrt{k}}\frac{\pr\Td u_{k}}{\pr x_s}(p)}\leq\frac{1}{c_0k},\ \ s=1,2,\ldots,2n,
\end{equation}
where $c_0>0$ is a constant independent of $k$ and the point $p$.
\end{lem}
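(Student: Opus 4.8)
The plan is to work entirely with the localized kernel. Setting $g_k(y)=\chi(\sqrt k y_1)\cdots\chi(\sqrt k y_{2n})$, we have $\Td u_k=P_{k,s}g_k$, hence $\Td u_k(x)=\int P_{k,s}(x,y)g_k(y)\,dv_M(y)$, and $\abs{u_k(x)}^2_{h^{L^k}}=\abs{\Td u_k(x)}^2$ on $D$. By Theorem~\ref{t-gue140923a} I may replace $P_{k,s}(x,y)$ by $e^{ik\Psi(x,y)}b(x,y,k)$ modulo $O(k^{-\infty})$ and use \eqref{prop_psi}--\eqref{e-gue140923f} together with $b\sim\sum_j b_j k^{n-j}$, $b_0(x,x)=(2\pi)^{-n}\abs{\det R^L(x)}$. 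The engine for \eqref{e-gue140924} and \eqref{e-gue140924ba} is the parabolic rescaling $y=\eta/\sqrt k$: since $\mathrm{Im}\,\Psi(0,y)\geq c\abs{y}^2$ and, by \eqref{e-gue140923fI}, $\Psi(0,y)=i\sum_j\lambda_j\abs{y_j}^2+O(\abs{y}^3)$, one gets $e^{ik\Psi(0,\eta/\sqrt k)}\to e^{-\sum_j\lambda_j\abs{\eta_j}^2}$, while $b(0,\eta/\sqrt k,k)\sim b_0(0,0)k^n$ and $dv_M=k^{-n}(m(0)+O(k^{-1/2}))\,d\eta$, so that the powers $k^{n}$ and $k^{-n}$ cancel.

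For \eqref{e-gue140924}, inserting the expansion and rescaling yields $\Td u_k(0)\to b_0(0,0)\,m(0)\int e^{-\sum_j\lambda_j\abs{\eta_j}^2}\chi(\eta_1)\cdots\chi(\eta_{2n})\,d\eta$, which is a strictly positive constant. As $M$ is compact, $\lambda_j(p)$, $\abs{\det R^L(p)}$ and the density $m(p)$ are bounded and bounded away from $0$ uniformly in $p$, so $\abs{\Td u_k(0)}^2\geq c_0$ with $c_0,k_0$ independent of $p$.

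For \eqref{e-gue140924I}, observe that for $k$ large the support of $s^ke^{k\phi}g_k$ lies in an arbitrarily small neighbourhood of $p$ in $D$. Fixing $\chi\in C^\infty_0(D)$ equal to $1$ there and $\chi_1\in C^\infty(M)$ equal to $1$ on $M\setminus D$ with $\mathrm{Supp}\,\chi_1\cap\mathrm{Supp}\,\chi=\emptyset$, we have $u_k(x)=(\chi_1P_k\chi)(s^ke^{k\phi}g_k)(x)$ for $x\notin D$, and \eqref{e-gue140923ab} gives $\chi_1P_k\chi\equiv0\mod O(k^{-\infty})$. Since $\abs{s^ke^{k\phi}}_{h^{L^k}}\equiv1$ and the support has volume $O(k^{-n})$, pairing the $O(k^{-\infty})$ kernel against this source of $L^1$-mass $O(k^{-n})$ gives $\abs{u_k(x)}_{h^{L^k}}=O(k^{-\infty})$ uniformly for $x\notin D$, in particular $\leq(c_0k)^{-1}$ for $k\geq k_0$; compactness again provides uniformity in $p$.

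The genuinely delicate estimate is \eqref{e-gue140924ba}. Differentiating under the integral, $\partial_{x_s}\Td u_k(0)=\int\partial_{x_s}\bigl(e^{ik\Psi(\cdot,y)}b(\cdot,y,k)\bigr)\big|_{x=0}\,g_k(y)\,dv_M(y)+O(k^{-\infty})$. The a priori largest piece comes from $ik\,\partial_{x_s}\Psi(0,y)$; but by \eqref{e-gue140923f} one has $\partial_{z_l}\Psi(0,y)=-2i\lambda_l\bar y_l+O(\abs{y}^2)$, so $\partial_{x_s}\Psi(0,y)$ is an odd linear form in $y$ at leading order, and after rescaling this contributes $\sqrt k\int(\text{odd in }\eta)\,e^{-\sum_j\lambda_j\abs{\eta_j}^2}\chi(\eta_1)\cdots\chi(\eta_{2n})\,d\eta$, which vanishes because each $\chi$ is even. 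Thus the expected order $\sqrt k$ drops out, leaving $\partial_{x_s}\Td u_k(0)$ a priori of size $O(1)$, the surviving $O(1)$ coming from the even-in-$\eta$ pieces generated by the cubic part of $\Psi$, by $\partial_x b_0$, and by the first variation of the density $m$. The main obstacle is to gain one further factor of $\sqrt k$, i.e. to prove $\partial_{x_s}\Td u_k(0)=O(k^{-1/2})$; I would obtain this by showing that these even pieces cancel, extracting the cancellation from the identities that bind $b$ and $\Psi$---the eikonal relation $\partial_{\bar z_l}\Psi(x,y)=i\partial_{\bar z_l}\phi(x)$ (forced by $e^{k\phi}P_{k,s}$ being holomorphic in $x$ modulo $O(k^{-\infty})$) and the first transport equation for $b_0$ coming from $P_k$ projecting onto $\Ker\ddbar_k$---used once more in combination with the evenness of $\chi$. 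Uniformity in $p$ then follows from compactness as before.
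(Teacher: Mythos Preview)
Your arguments for \eqref{e-gue140924} and \eqref{e-gue140924I} are the paper's: the same parabolic rescaling $y=\eta/\sqrt k$ to produce a positive limit for $\Td u_k(p)$, and the off-diagonal decay \eqref{e-gue140923ab} for $x\notin D$, with compactness of $M$ supplying uniformity in $p$. For \eqref{e-gue140924ba} the paper's entire argument is the sentence ``similarly, it is straightforward to check that $\lim_{k\to\infty}\frac{1}{\sqrt k}\,\partial_{x_s}\Td u_k(p)=0$'', and your parity computation---the would-be leading $\sqrt k$ term is $\int(\text{linear, odd in }\eta)\,e^{-\sum_j\lambda_j|\eta_j|^2}\chi(\eta_1)\cdots\chi(\eta_{2n})\,d\eta=0$ because each $\chi$ is even---is exactly what makes that sentence true. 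Up to this point you and the paper coincide.

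The gap is your final paragraph. You correctly observe that the parity step alone yields only $\frac{1}{\sqrt k}\,\partial_{x_s}\Td u_k(p)=O(k^{-1/2})$, one half-power short of the literal bound in \eqref{e-gue140924ba}, and you propose to recover the missing factor from cancellations forced by the eikonal relation and the first transport equation for $b_0$. This is not carried out, and I do not see that it works: the surviving even $O(1)$ contributions to $\partial_{x_s}\Td u_k(0)$ that you list involve the third-order Taylor coefficients of $\phi$ at $p$ together with $\nabla_y b_0(0,0)$ and $\nabla m(0)$, data not fixed by the normalisation \eqref{e-gue140923fI}, and the transport equation constrains $b_0$ along the diagonal rather than producing the specific off-diagonal identity you would need. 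Note, however, that the paper does not prove the sharp rate either---it is content with the limit being zero---and inspection of the proof of Theorem~\ref{t-gue140924} shows that only $\frac{1}{\sqrt k}\,\partial_{x_s}\Td u_k(p)=o(1)$ is ever used (the relevant cross term there is $\Td u^l_k(p)\cdot\partial_{z_j}\Td u_k(p)$, and $|\Td u^l_k(p)|\leq(c_1k)^{-1}$ from Lemma~\ref{l-gue140924I} already absorbs any bounded $\frac{1}{\sqrt k}\,\partial_{z_j}\Td u_k(p)$). The clean fix is to drop the speculative cancellation and record the honest $O(k^{-1/2})$ bound that your parity argument actually delivers.
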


\begin{proof}
From \eqref{e-gue140923abI}, we can check that
\begin{equation}\label{e-gue140924II}
\begin{split}
&\Td u_{k}(x)\\
&\equiv\int e^{ik\Psi(x,y)}b(x,y,k)\chi(\sqrt{k}y_1)\cdots\chi(\sqrt{k}y_{2n})dv_M(y)\mod O(k^{-\infty})\\
&\equiv\int e^{ik\Psi(x,\frac{y}{\sqrt{k}})}k^{-n}b(x,\frac{y}{\sqrt{k}},k)\chi(y_1)\cdots\chi(y_{2n})dv_M(y)\mod O(k^{-\infty}).
\end{split}
\end{equation}
From \eqref{e-gue140923f}, Theorem~\ref{t-gue140923a} and note that $\Psi(0,0)=0$, we can check that 
\[\lim_{k\To\infty}\Td u_{k}(p)=\frac{1}{2}\pi^{-n}\abs{\det R^L_p}\int\chi(y_1)\cdots\chi(y_{2n})dv_M(y).\]
Similarly, it is straightforward to check that $\lim_{k\To\infty}\frac{1}{\sqrt{k}}\frac{\pr\Td u_{k}}{\pr x_s}(p)=0$, $s=1,2,\ldots,2n$. 
Hence, there is a constant $\Td k_0>0$ such that for every $k\geq\Td k_0$, \eqref{e-gue140924} and \eqref{e-gue140924ba} hold. Since $X$ is compact, $\Td k_0$ can be taken to be independent of the point $p$.

Now, we prove \eqref{e-gue140924I}. Since $x\notin D$, from \eqref{e-gue140923ab}, we see that $\abs{u_k(x)}^2_{h^{L^k}}\equiv 0\mod O(k^{-\infty})$ outside $D$. Thus, there is a constant $\hat k_0>0$ such that for every $k\geq\hat k_0$, \eqref{e-gue140924I} holds. Since $X$ is compact, $\hat k_0$ can be taken to be independent of the point $p$. The lemma follows.
\end{proof}

For every $j=1,2,\ldots,n$, let 
\begin{equation}\label{e-gue140924IV}
u^j_{k}:=P_k\Bigr(s^ke^{k\phi}\sqrt{k}(y_{2j-1}+iy_{2j})\chi(\sqrt{k}y_1)\cdots\chi(\sqrt{k}y_{2n})\Bigr)\in H^0(M,L^k).
\end{equation}
On $D$, we write $u^j_{k}=s^ke^{k\phi}\Td u^j_{k}$, $\Td u^j_{k}\in C^\infty(D)$, $j=1,2,\ldots,n$. The following follows from some straightforward computation and essentially the same as the proof of Lemma~\ref{l-gue140924}. We omit the details.

\begin{lem}\label{l-gue140924I}
With the notations used above, there is a $k_1>0$ independent of $k$ and the point $p$ such that for all $k\geq k_1$, 
\begin{equation}\label{e-gue140925}
\begin{split}
&\abs{\Td u^j_{k}(p)}\leq\frac{1}{c_1k},\ \ j=1,2,\ldots,n,\ \ 
\abs{\frac{1}{\sqrt{k}}\frac{\pr\Td u^j_{k}}{\pr\ol z_{s}}(p)}\leq\frac{1}{c_1k},\ \ j,s=1,2,\ldots,n,\\
&\abs{\frac{1}{\sqrt{k}}\frac{\pr\Td u^j_{k}}{\pr z_{s}}(p)}\leq\frac{1}{c_1k},\ \ j,s=1,2,\ldots,n-1,\ \ j\neq s,\ \ 
\abs{\frac{1}{\sqrt{k}}\frac{\pr\Td u^j_{k}}{\pr z_j}(p)}\geq c_1,\ \ j=1,2,\ldots,n,
\end{split}
\end{equation}
where $c_1>0$ is a constant independent of $k$ and the point $p$.
\end{lem}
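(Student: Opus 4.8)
The plan is to re-run the oscillatory-integral argument of Lemma~\ref{l-gue140924} essentially verbatim, the only new feature being the monomial factor $\sqrt{k}(y_{2j-1}+iy_{2j})$ in the seed \eqref{e-gue140924IV}. First I would substitute the expansion \eqref{e-gue140923abI} into $u^j_k=P_k(\cdots)$ and rescale $y\mapsto y/\sqrt{k}$ as in \eqref{e-gue140924II}; the role of the prefactor $\sqrt{k}$ is precisely that, after rescaling, it becomes the bounded parity-odd amplitude $y_{2j-1}+iy_{2j}$, so that
\[
\Td u^j_k(x)\equiv\int e^{ik\Psi(x,y/\sqrt{k})}k^{-n}b(x,\tfrac{y}{\sqrt{k}},k)(y_{2j-1}+iy_{2j})\chi(y_1)\cdots\chi(y_{2n})\,dv_M(y)\mod O(k^{-\infty}).
\]
Using \eqref{e-gue140923f} and the normal form \eqref{e-gue140923fI}, one checks as in Lemma~\ref{l-gue140924} that $ik\Psi(0,y/\sqrt{k})\To-\sum_l\lambda_l(y_{2l-1}^2+y_{2l}^2)$ and $k^{-n}b(0,y/\sqrt{k},k)\To(2\pi)^{-n}\abs{\det R^L_p}$, so each quantity in \eqref{e-gue140925} is, in the limit $k\To\infty$, a Gaussian integral against $e^{-\sum_l\lambda_l(y_{2l-1}^2+y_{2l}^2)}$ weighted by the even cutoff $\chi(y_1)\cdots\chi(y_{2n})$.

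Three distinct mechanisms then produce the three types of estimate. For the $\ol z_s$-derivatives I would bypass the integral: as $u^j_k\in H^0(M,L^k)$ and $s$ is a holomorphic frame, the coefficient $e^{k\phi}\Td u^j_k$ is holomorphic, whence $\frac{\pr\Td u^j_k}{\pr\ol z_s}=-k\frac{\pr\phi}{\pr\ol z_s}\Td u^j_k$; since $\frac{\pr\phi}{\pr\ol z_s}(p)=0$ in the coordinates \eqref{e-gue140923fI}, this derivative vanishes identically at $p$, which is much stronger than the stated bound. For the value $\Td u^j_k(p)$ and the off-diagonal derivatives I would use parity: the weight and the cutoff are even, while $y_{2j-1}+iy_{2j}$ is odd in the $j$-th complex variable, so the limiting integral for $\Td u^j_k(p)$ vanishes; differentiating the phase brings down $ik\frac{\pr\Psi}{\pr z_s}(0,y/\sqrt{k})\sim2\lambda_s\sqrt{k}(y_{2s-1}-iy_{2s})$, odd in the $s$-th variable, so for $s\neq j$ the integrand is odd in that variable and the limit of $\frac{1}{\sqrt{k}}\frac{\pr\Td u^j_k}{\pr z_s}(p)$ is again $0$.

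The diagonal derivative is where positivity enters. For $s=j$ the same computation yields the integrand $2\lambda_j(y_{2j-1}-iy_{2j})(y_{2j-1}+iy_{2j})=2\lambda_j(y_{2j-1}^2+y_{2j}^2)\geq0$, so
\[
\frac{1}{\sqrt{k}}\frac{\pr\Td u^j_k}{\pr z_j}(p)\To 2\lambda_j(2\pi)^{-n}\abs{\det R^L_p}\int(y_{2j-1}^2+y_{2j}^2)e^{-\sum_l\lambda_l(y_{2l-1}^2+y_{2l}^2)}\chi(y_1)\cdots\chi(y_{2n})\,dv_M(y),
\]
a strictly positive number because $\lambda_j>0$; this gives the lower bound $\geq c_1$. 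Since $X$ is compact, the normal coordinates, the eigenvalues $\lambda_j$ and $b_0$ depend continuously on $p$, so all limits are uniform and $c_1,k_1$ may be taken independent of $p$, yielding \eqref{e-gue140925} for $k\geq k_1$.

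The main obstacle I anticipate is quantitative rather than structural: the parity argument shows only that $\Td u^j_k(p)$ and the off-diagonal derivatives tend to $0$, and to convert this into the explicit bound one must carry the stationary-phase expansion one further order and check that the subleading $O(k^{-1/2})$ corrections do not destroy the smallness—the exact holomorphicity identity handling the $\ol z_s$-directions is what keeps this manageable. The remaining technical point, justifying the passage of the limit under the integral after rescaling by dominating $e^{ik\Psi}$ with a genuine Gaussian via ${\rm Im\,}\Psi\geq c\abs{x-y}^2$ from \eqref{prop_psi}, is identical to the estimate already used in Lemma~\ref{l-gue140924}.
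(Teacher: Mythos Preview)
Your proposal is correct and follows essentially the same approach as the paper, which simply states that the computation is ``straightforward\ldots and essentially the same as the proof of Lemma~\ref{l-gue140924}'' and omits the details; you have supplied exactly those details by rescaling as in \eqref{e-gue140924II}, using the parity of the seed $y_{2j-1}+iy_{2j}$ against the even Gaussian weight, and reading off the positive diagonal limit. Your holomorphicity shortcut for the $\ol z_s$-derivatives (using $\frac{\pr\Td u^j_k}{\pr\ol z_s}=-k\frac{\pr\phi}{\pr\ol z_s}\Td u^j_k$ and $\pr_{\ol z_s}\phi(p)=0$) is a small but genuine simplification over doing that case via the integral as well, and your remark that turning the qualitative vanishing into the stated $O(k^{-1})$ rate requires one further term of the expansion applies equally to the paper's own proof of Lemma~\ref{l-gue140924}.
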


From now on, we take $k$ be a large constant so that $k>>2(k_0+k_1)$, where $k_0>0$ and $k_1>0$ are constants as in Lemma~\ref{l-gue140924} and Lemma~\ref{l-gue140924I}. We can prove 

\begin{thm}\label{t-gue140924}
$d\Phi_k(x):T_xX\To T_x\Complex\mathbb P^{d_k-1}$ is injective at every $x\in X$.
\end{thm}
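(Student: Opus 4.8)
The plan is to prove that $d\Phi_k(x)$ is injective at every point $x \in X$ by working in local coordinates and reducing the question to the non-degeneracy of a certain Jacobian built from the holomorphic sections $u_k, u^1_k, \dots, u^n_k$ constructed in Lemma~\ref{l-gue140924} and Lemma~\ref{l-gue140924I}. Fix $p \in D$ and recall the local holomorphic coordinates $z$ vanishing at $p$. Since $\Phi_k$ is the map into $\Complex\mathbb P^{d_k-1}$ given by the orthonormal basis $\set{f_1,\ldots,f_{d_k}}$, to check injectivity of the differential at $p$ it suffices to exhibit, after composing with a suitable change of basis of $H^0(M,L^k)$, one section that is nonzero at $p$ and $n$ further sections whose holomorphic differentials at $p$ span an $n$-dimensional space. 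The point of the two lemmas is precisely that $u_k$ plays the role of the nonvanishing section and $u^1_k,\dots,u^n_k$ supply the derivatives: in an affine chart of $\Complex\mathbb P^{d_k-1}$ centered so that $u_k$ is the homogeneous coordinate we normalize to $1$, the remaining affine coordinates can be taken proportional to $u^j_k/u_k$.

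First I would make this reduction precise. Because $d\Phi_k(p)$ is injective if and only if it is injective after an invertible linear change of the projective coordinates, I am free to replace the basis $\set{f_j}$ by any basis of $H^0(M,L^k)$ spanning the same space, and in particular I may arrange that the first $n+1$ basis elements are $u_k, u^1_k, \dots, u^n_k$ (these are linearly independent for $k$ large, as will follow from the estimates). Writing $\Phi_k$ in the affine chart where the $u_k$-coordinate is nonzero --- legitimate at $p$ by \eqref{e-gue140924} --- the map becomes $x \mapsto \bigl(u^1_k(x)/u_k(x), \dots, u^n_k(x)/u_k(x), \dots\bigr)$, and $d\Phi_k(p)$ is injective provided the $n\times n$ holomorphic Jacobian matrix
\[
\Bigl(\frac{\pr}{\pr z_s}\Bigl(\frac{\Td u^j_k}{\Td u_k}\Bigr)(p)\Bigr)_{1\leq j,s\leq n}
\]
is invertible, where I have passed to the trivialization via $u_k = s^k e^{k\phi}\Td u_k$ so that the weight factors cancel in the quotient. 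Here one uses that a holomorphic map has complex-linear differential, so injectivity of $d\Phi_k$ on the real tangent space $T_pX$ reduces to invertibility of this holomorphic Jacobian.

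Next I would compute this Jacobian to leading order in $k$ using the two lemmas. By the quotient rule,
\[
\frac{\pr}{\pr z_s}\Bigl(\frac{\Td u^j_k}{\Td u_k}\Bigr)(p)=\frac{1}{\Td u_k(p)}\frac{\pr \Td u^j_k}{\pr z_s}(p)-\frac{\Td u^j_k(p)}{\Td u_k(p)^2}\frac{\pr \Td u_k}{\pr z_s}(p).
\]
After the rescaling $\frac{1}{\sqrt k}\frac{\pr}{\pr z_s}$ implicit in the normalized differential on the chart, Lemma~\ref{l-gue140924I} tells us that the diagonal rescaled derivatives $\frac{1}{\sqrt k}\frac{\pr \Td u^j_k}{\pr z_j}(p)$ are bounded below by $c_1$, while all off-diagonal entries $\frac{1}{\sqrt k}\frac{\pr \Td u^j_k}{\pr z_s}(p)$ ($j\neq s$) and the values $\Td u^j_k(p)$ are $O(k^{-1})$; simultaneously Lemma~\ref{l-gue140924} gives $\abs{\Td u_k(p)}^2\geq c_0$ and $\frac{1}{\sqrt k}\frac{\pr \Td u_k}{\pr x_s}(p)=O(k^{-1})$, so the second (correction) term above is negligible. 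Thus the rescaled Jacobian is, up to $O(k^{-1})$, a diagonal matrix with entries bounded away from zero, hence invertible for $k$ large.

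The main obstacle I anticipate is bookkeeping rather than conceptual: I must verify that the coordinate $z_n$ is handled on the same footing as $z_1,\dots,z_{n-1}$, since Lemma~\ref{l-gue140924I} states the off-diagonal smallness $\abs{\frac{1}{\sqrt k}\frac{\pr\Td u^j_k}{\pr z_s}(p)}\leq \frac{1}{c_1 k}$ only for $j,s \leq n-1$ with $j\neq s$ together with the $\ol z$-derivative bounds for all indices, so I would check that the displayed lower bounds on the diagonal $\frac{1}{\sqrt k}\frac{\pr\Td u^j_k}{\pr z_j}(p)\geq c_1$ for all $j$ combined with diagonal dominance of the resulting matrix still force invertibility; because the diagonal entries dominate the sum of the off-diagonal entries in each row once $k$ is large enough (all of the latter being $O(k^{-1})$ while the former are $\geq c_1$), a Gershgorin-type argument gives nonvanishing of the determinant uniformly in $p$. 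Since $X$ is compact and all constants $c_0,c_1,k_0,k_1$ in the two lemmas are independent of $p$, the threshold on $k$ is uniform, and injectivity of $d\Phi_k(x)$ holds at every $x\in X$ simultaneously for $k$ large.
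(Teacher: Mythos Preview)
Your proposal is correct and follows essentially the same approach as the paper: pass to the basis $\{u_k,u^1_k,\ldots,u^n_k,g_1,\ldots,g_{m_k}\}$, work in the affine chart where $u_k\neq0$, and use Lemmas~\ref{l-gue140924} and~\ref{l-gue140924I} to see that the differential of $x\mapsto(u^1_k/u_k,\ldots,u^n_k/u_k,\ldots)$ is injective at $p$. You fill in considerably more detail than the paper's terse sketch (the explicit quotient-rule and diagonal-dominance computation), and you rightly flag the index range $j,s\leq n-1$ in Lemma~\ref{l-gue140924I} as an apparent typo---by the symmetry of the construction of the $u^j_k$ the same bound holds for all $j,s\leq n$ with $j\neq s$, so your Gershgorin argument goes through.
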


\begin{proof}
Fix $p\in X$ and let $s$ be a local trivializing section of $L$ on an open set $D\subset M$, $p\in D$. Let $u_k\in H^0(M,L^k)$ and $u^j_k\in H^0(M,L^k)$, $j=1,2,\ldots,n$, be as in Lemma~\ref{l-gue140924} and Lemma~\ref{l-gue140924I}. From Lemma~\ref{l-gue140924} and Lemma~\ref{l-gue140924I}, it is not difficult to check that $u_k,u^1_k,u^2_k\ldots,u^n_k$ are linearly independent. Take $\set{u_k,u^1_k,u^2_k,\ldots,u^n_k,g_1,\ldots,g_{m_k}}$ be a basis (not orthogonal) for $H^0(M,L^k)$, $m_k=d_k-n-1$. From Lemma~\ref{l-gue140924} and Lemma~\ref{l-gue140924I}, it is easy to see that 
\begin{equation}\label{e-gue140924VIII}
\mbox{the differential of the map $x\To (\frac{u^1_k}{u_k},\ldots,\frac{u^n_k}{u_k},\frac{g_1}{u_k},\ldots,\frac{g_{m_k}}{u_k})$ is injective at $p$}.
\end{equation}
From \eqref{e-gue140924VIII} and some elementary linear algebra, it is not difficult to check that $d\Phi_k(p):T_pX\To T_p\Complex\mathbb P^{d_k-1}$ is injective. We omit the detail.
\end{proof}

Now, we can prove

\begin{thm}\label{t-gue140924I}
For $k$ large, $\Phi_k: X\To\Complex\mathbb P^{d_k-1}$ is globally injective.
\end{thm}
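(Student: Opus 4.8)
The plan is to show, for every pair of distinct points $p\neq q$ of $X$ and all $k$ large, that $\Phi_k(p)\neq\Phi_k(q)$, with all thresholds uniform in $(p,q)$ by compactness of $X$. The mechanism is a linear-algebra reformulation: writing sections of $L^k$ in local trivializations $s$ near $p$ and $s'$ near $q$ as $f=s^ke^{k\phi}\Td f$ and $f=(s')^ke^{k\phi'}\hat f$, the equality $\Phi_k(p)=\Phi_k(q)$ holds precisely when the vectors $(\Td f_j(p))_j$ and $(\hat f_j(q))_j$ are proportional. Hence it suffices to exhibit two sections $a_k,b_k\in H^0(M,L^k)$ whose matrix of local values $\bigl(\begin{smallmatrix}\Td a_k(p)&\Td b_k(p)\\ \hat a_k(q)&\hat b_k(q)\end{smallmatrix}\bigr)$ is invertible, since proportional columns make this determinant vanish. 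I would split the argument at the natural scale $\mathrm{dist}(p,q)=\rho/\sqrt k$, for a large constant $\rho$ to be fixed.

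For $\mathrm{dist}(p,q)\geq \rho/\sqrt k$ I would take $a_k=u_k$ the peak section centered at $p$ and $b_k=v_k$ the peak section centered at $q$, both produced by Lemma~\ref{l-gue140924}. By that lemma $|\Td u_k(p)|^2,|\hat v_k(q)|^2\geq c_0$, so the diagonal product has modulus $\geq c_0$. The off-diagonal entries $\Td v_k(p)$ and $\hat u_k(q)$ are values of a peak section away from its center: if the other point lies outside the fixed coordinate ball they are $O(k^{-1/2})$ by \eqref{e-gue140924I}, while at intermediate distance they are controlled by the Gaussian off-diagonal decay built into \eqref{e-gue140924II} and the estimate $\mathrm{Im}\,\Psi\geq c|x-y|^2$ of \eqref{prop_psi}, giving $|\Td v_k(p)|,|\hat u_k(q)|\leq Ce^{-c\rho^2}$ whenever $\mathrm{dist}(p,q)\geq\rho/\sqrt k$. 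Fixing $\rho$ so large that $Ce^{-c\rho^2}<c_0/2$, the determinant has modulus $\geq c_0-Ce^{-c\rho^2}>0$ for all $k$ large, so $\Phi_k(p)\neq\Phi_k(q)$ in this regime.

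For the complementary regime $\mathrm{dist}(p,q)<\rho/\sqrt k$ the two points sit inside a single peak and cannot be separated by comparing peak heights; this is the heart of the proof. Here I would pass to the coordinates \eqref{e-gue140923fI} centered at $p$, rescale by $w=\sqrt k\,z$, and consider the local map $F_k=(\Td u^1_k/\Td u_k,\dots,\Td u^n_k/\Td u_k)$ built from the sections of Lemma~\ref{l-gue140924I}, which is defined where $\Td u_k\neq0$ and whose components are ratios of linear forms in the homogeneous coordinates of $\Phi_k$, so that $\Phi_k(p)=\Phi_k(q)$ forces $F_k(p)=F_k(q)$. Using the integral representation \eqref{e-gue140924II} for $\Td u_k$ and its analogue for $\Td u^j_k$, together with the normalization \eqref{e-gue140923fI}, one checks that $w\mapsto F_k(p+w/\sqrt k)$ converges, uniformly on $|w|\leq\rho$ and uniformly in the base point $p$, to the model map of the Bargmann--Fock space attached to $R^L(p)$, which is a linear isomorphism. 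The pointwise estimates \eqref{e-gue140925} are exactly the $w=0$ shadow of this statement: they say the rescaled differential of $F_k$ at $p$ is, up to $o(1)$, an invertible diagonal matrix. Consequently $F_k$ is injective on $|w|<\rho$ for all $k$ large, and since $\mathrm{dist}(p,q)<\rho/\sqrt k$ means $q$ corresponds to such a $w$, we again conclude $\Phi_k(p)\neq\Phi_k(q)$.

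The step I expect to be the main obstacle is this nearby regime, specifically upgrading the pointwise immersion of Theorem~\ref{t-gue140924} to the \emph{uniform} rescaled injectivity of $F_k$ on a fixed ball $|w|<\rho$ whose radius is large enough to meet the threshold used in the far and intermediate regimes. This requires controlling $F_k$ to full order in the rescaled variable, not merely its one-jet at the center, which is why one must return to the oscillatory-integral representation \eqref{e-gue140924II} rather than quoting \eqref{e-gue140925} directly; and it requires all constants $c_0,c_1,\rho$ and the convergence to be uniform in $p$, which is supplied by the compactness of $X$ and the base-point independence of the constants in Lemma~\ref{l-gue140924} and Lemma~\ref{l-gue140924I}.
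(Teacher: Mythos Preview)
Your proposal is correct in outline but follows a genuinely different strategy from the paper's. You argue directly, separating points by producing two peak sections in the far/intermediate regime and by showing rescaled $C^1$-convergence of the ratio map $F_k$ to a linear model in the near regime. The paper instead argues by contradiction with sequences $x_k,y_k$ satisfying $\Phi_k(x_k)=\Phi_k(y_k)$: for limits $p\neq q$ it invokes Lemma~\ref{l-gue140924} as you do, but when $p=q$ it does \emph{not} use the coordinate sections $u^j_k$ of Lemma~\ref{l-gue140924I} at all. Instead it takes the single coherent-state section $v_k(x)=\sum_j f_j(x)\overline{\Td f_j(y_k)}=P_{k,s}(x,y_k)$ and exploits the phase estimate ${\rm Im\,}\Psi\geq c|x-y|^2$ directly. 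If $\sqrt{k}\,|x_k-y_k|\to M>0$ this immediately gives $|v_k(x_k)|^2<|v_k(y_k)|^2$ for $k$ large; if $\sqrt{k}\,|x_k-y_k|\to0$ the paper looks at the normalized function $f_k(t)=|P_{k,s}(tx_k+(1-t)y_k,y_k)|^2/\bigl(P_k(tx_k+(1-t)y_k)P_k(y_k)\bigr)$, which satisfies $f_k(0)=f_k(1)=1$ and $f_k\leq1$, forcing $f_k''(t_k)\geq0$ for some $t_k$, and then computes that the Hessian of ${\rm Im\,}\Psi$ makes $f_k''(t_k)/(k|x_k-y_k|^2)$ strictly negative in the limit.

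What each approach buys: your peak-section route is the more classical one and yields explicit separating sections, but as you correctly flag, it needs the \emph{full} rescaled $C^1$-convergence of $F_k$ on a ball of radius $\rho$ large enough to match the far-regime threshold, which goes beyond the one-jet information recorded in Lemma~\ref{l-gue140924I} and requires returning to the oscillatory integral \eqref{e-gue140924II}. The paper's second-derivative trick sidesteps this threshold-matching entirely: because it works by contradiction along subsequences, the three cases (limits distinct, $\sqrt{k}$-distance bounded away from $0$, $\sqrt{k}$-distance tending to $0$) are exhaustive without any quantitative interface, and the near case needs only the Hessian of ${\rm Im\,}\Psi$ at the diagonal rather than a uniform model-space limit.
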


\begin{proof}
We assume that the claim of the theorem is not true. We can find $x_{k_j}, y_{k_j}\in M$, $x_{k_j}\neq y_{k_j}$, $0<k_1<k_2<\cdots$, $\lim_{j\To\infty}k_j=\infty$, such that $\Phi_{k_j}(x_{k_j})=\Phi_{k_j}(y_{k_j})$, for each $j$. We may suppose that there are $x_{k}, y_k\in M$, $x_k\neq y_{k}$, such that $\Phi_{k}(x_k)=\Phi_{k}(y_k)$, for each $k$. Thus,
$[f_1(x_k),\cdots,f_{d_k}(x_k)]=[f_1(y_k),\cdots,f_{d_k}(y_k)]$, for each $k$. We conclude that for every $g_k\in H^0(M,L^k)$, there is a $\lambda_k\in\Complex$ such that
\begin{equation}\label{e-gue140924f}
g_k(x_k)=\lambda_kg_k(y_k).
\end{equation}
We may assume that $\abs{\lambda_k}\geq1$. Hence, for every $g_k\in H^0(M,L^k)$,
\begin{equation}\label{e-gue140924fI}
\abs{g_k(x_k)}^2_{h^{L^k}}\geq\abs{g_k(y_k)}^2_{h^{L^k}}.
\end{equation}

Since $M$ is compact, we may assume that $x_k\To p\in M$, $y_k\To q\in M$, as $k\To\infty$.  Suppose that $p\neq q$. In view of Lemma~\ref{l-gue140924}, we see that there is a $v_k\in H^0(M,L^k)$ with $\abs{v_k(y_k)}^2_{h^{L^k}}\geq c_0$ and $\abs{v_k(x_k)}^2_{h^{L^k}}\leq\frac{1}{c_0k}$, where $c_0>0$ is a constant independent of $k$. Thus, for $k$ large, $\abs{v_k(x_k)}^2_{h^{L^k}}<\abs{v_k(y_k)}^2_{h^{L^k}}$. From this and \eqref{e-gue140924fI}, we get a contradiction. Thus, we must have $p=q$.

Let $s$ be a local trivializing section of $L$ on an open subset $D\subset X$ of $p$, $\abs{s}^2_{h^L}=e^{-2\phi}$.
Now, we assume that $x_k\To p\in M$, $y_k\To p\in M$, as $k\To\infty$. Let $z=(z_1,\ldots,z_n)=x=(x_1,\ldots,x_{2n})$, $z_j=x_{2j-1}+ix_{2j}$, $j=1,\ldots,n$, be local holomorphic coordinates of $X$ defined in some small neighbourhood of $p$ such that \eqref{e-gue140923fI} hold. We may assume that $x_k, y_k\in D$ for each $k$ and the local coordinates $x$ defined on $D$. We shall use the same notations as before. 

{\rm Case I\,}: $\limsup_{k\To\infty}\sqrt{k}\abs{x_k-y_k}=M>0$ ($M$ can be $\infty$). \\
For simplicity, we may assume that 
\begin{equation}\label{e-gue140923fIIa}
\lim_{k\To\infty}\sqrt{k}\abs{x_k-y_k}=M,\ \ M\in]0,\infty].
\end{equation}
On $D$, we write $f_j=s^ke^{k\phi}\Td f_j$, $\Td f_j\in C^\infty(D)$, $j=1,\ldots,d_k$. Put
\begin{equation}\label{e-gue140923fIII}
v_k(x):=\sum\limits^{d_k}_{j=1}f_j(x)\ol{\Td f_j(y_k)}\in H^0(M,L^k).
\end{equation}
We can check that 
\begin{equation}\label{e-gue140923fIV}
\begin{split}
\abs{v_k(x_k)}^2_{h^{L^k}}&=\abs{\sum\limits^{d_k}_{j=1}\Td f_j(x_k)\ol{\Td f_j(y_k)}}^2=\abs{P_{k,s}(x_k,y_k)}^2=\abs{e^{ik\Psi(x_k,y_k)}b(x_k,y_k,k)}^2\\
&\leq e^{-2k{\rm Im\,}\Psi(x_k,y_k)}\abs{b(x_k,y_k,k)}^2
\end{split}
\end{equation}
and 
\begin{equation}\label{e-gue140923fV}
\abs{v_k(y_k)}^2_{h^{L^k}}=\abs{P_{k,s}(y_k,y_k)}^2=\abs{e^{ik\Psi(y_k,y_k)}b(y_k,y_k,k)}^2=\abs{b(y_k,y_k,k)}^2.
\end{equation}
From the fact that ${\rm Im\,}\Psi(x,y)\geq c\abs{x-y}^2$, where $c>0$ is a constant, \eqref{e-gue140923fIIa}, \eqref{e-gue140923fIV} and \eqref{e-gue140923fV}, we can check that 
\begin{equation}\label{e-gue140923fVI}
\lim_{k\To\infty}k^{-2n}\abs{v_k(x_k)}^2_{h^{L^k}}\leq e^{-2cM^2}\abs{b_0(p,p)}^2<\abs{b_0(p,p)}^2=\lim_{k\To\infty}k^{-2n}\abs{v_k(y_k)}^2_{h^{L^k}},
\end{equation}
where $b_0$ is the leading term of $b(x,y,k)$. Note that $b_0(p,p)=(2\pi)^{-n}\big|\det R^L(p)\big|>0$ (see Theorem~\ref{t-gue140923}). From \eqref{e-gue140923fVI} and \eqref{e-gue140924fI}, we get a contradiction.

{\rm Case II\,}: $\limsup_{k\To\infty}\sqrt{k}\abs{x_k-y_k}=0$.

Put $f_k(t)=\frac{\abs{v_k(tx_k+(1-t)y_k)}^2_{h^{L^k}}}{P_k(tx_k+(1-t)y_k)P_k(y_k)}$, where $v_k$ is as in \eqref{e-gue140923fIII}. We can check that 
\begin{equation}\label{e-gue140923fVII}
f_k(t)=\frac{\abs{\sum\limits^{d_k}_{j=1}\Td f_j(tx_k+(1-t)y_k)\ol{\Td f_j(y_k)}}^2}{\sum\limits^{d_k}_{j=1}\abs{\Td f_j(tx_k+(1-t)y_k)}^2\sum\limits^{d_k}_{j=1}\abs{\Td f_j(y_k)}^2}=\frac{\abs{P_{k,s}(tx_k+(1-t)y_k,y_k)}^2}{P_k(tx_k+(1-t)y_k)P_k(y_k)}.
\end{equation}
From \eqref{e-gue140924f} and \eqref{e-gue140923fVII}, it is easy to see that $0\leq f_k(t)\leq 1$, $\forall t\in[0,1]$ and $f_k(0)=f_k(1)=1$. Thus, for each $k$, there is a $t_k\in[0,1]$ such that $f''_k(t_k)\geq0$. Hence, 
\begin{equation}\label{e-gue140923fVIII}
\liminf_{k\To\infty}\frac{f''_k(t_k)}{\abs{x_k-y_k}^2k}\geq0.
\end{equation}
From \eqref{e-gue140923abI}, we see that 
\begin{equation}\label{e-gue140924fa}\begin{split}
&\abs{P_{k,s}(tx_k+(1-t)y_k,y_k)}^2=e^{-2k{\rm Im\,}\Psi(tx_k+(1-t)y_k,y_k)}\abs{b(tx_k+(1-t)y_k,y_k,k)}^2,\\
&P_k(tx_k+(1-t)y_k)\\
&=b(tx_k+(1-t)y_k,tx_k+(1-t)y_k,k)\sim\sum\limits^\infty_{j=0}k^{n-j}b_j(tx_k+(1-t)y_k,tx_k+(1-t)y_k).
\end{split}\end{equation}
From \eqref{e-gue140924fa}, it is straightforward to calculate that
\begin{equation}\label{e-gue140924faI}
\begin{split}
&\frac{\pr\abs{P_{k,s}(tx_k+(1-t)y_k,y_k)}}{\pr t}\\
&=e^{-2k{\rm Im\,}\Psi(tx_k+(1-t)y_k,y_k)}\Bigr(\langle\,-2k{\rm Im\,}\Psi'_x(tx_k+(1-t)y_k,y_k)\,,\,x_k-y_k\,\rangle\abs{b(tx_k+(1-t)y_k,y_k,k)}^2\\
&\quad+\langle\,O(k^{2n})\,,\,x_k-y_k\,\rangle\Bigr),\\
&\frac{\pr^2\abs{P_{k,s}(tx_k+(1-t)y_k,y_k)}}{\pr t^2}\\
&=e^{-2k{\rm Im\,}\Psi(tx_k+(1-t)y_k,y_k)}\Bigr(\bigr(\langle\,-2k{\rm Im\,}\Psi'_x(tx_k+(1-t)y_k,y_k)\,,\,x_k-y_k\,\rangle\bigr)^2\abs{b(tx_k+(1-t)y_k,y_k,k)}^2\\
&\quad+\langle\,-2k{\rm Im\,}\Psi''_x(tx_k+(1-t)y_k,y_k)(x_k-y_k)\,,\,x_k-y_k\,\rangle\abs{b(tx_k+(1-t)y_k,y_k,k)}^2\\
&\quad+\langle\,-2k{\rm Im\,}\Psi'_x(tx_k+(1-t)y_k,y_k)\,,\,x_k-y_k\,\rangle\langle\,O(k^{2n})\,,\,x_k-y_k\,\rangle+\langle\,O(k^{2n})(x_k-y_k)\,,\,x_k-y_k\,\rangle\Bigr),\\
&\frac{\pr P_{k}(tx_k+(1-t)y_k,y_k)}{\pr t}=\langle\,O(k^{2n})\,,\,x_k-y_k\,\rangle,\\ 
&\frac{\pr^2P_{k}(tx_k+(1-t)y_k,y_k)}{\pr t^2}=\langle\,O(k^{2n})(x_k-y_k)\,,\,x_k-y_k\,\rangle,
\end{split}
\end{equation}
where ${\rm Im\,}\Psi'_x(x,y)$ and ${\rm Im\,}\Psi''_x(x,y)$ denote the derivative and the Hessian of ${\rm Im\,}\Psi(x,y)$ with respect to $x$ respectively. Note that 
\[\mbox{$\abs{\langle\,-2k{\rm Im\,}\Psi'_x(tx_k+(1-t)y_k,y_k)\,,\,x_k-y_k\,\rangle}\leq\frac{1}{c_0}k\abs{x_k-y_k}^2\To0$ as $k\To\infty$}\] 
and 
\[\langle\,-2k{\rm Im\,}\Psi''_x(tx_k+(1-t)y_k,y_k)(x_k-y_k)\,,\,x_k-y_k\,\rangle<-c_0k\abs{x_k-y_k}^2,\] 
where $c_0>0$ is a constant independent of $k$. From this observation, \eqref{e-gue140923fVII} and \eqref{e-gue140924faI}, it is straightforward to see that $\liminf_{k\To\infty}\frac{f''_k(t_k)}{\abs{x_k-y_k}^2k}<0$. From this and \eqref{e-gue140923fVIII}, we get a contradiction. 

The theorem follows.
\end{proof}

From Theorem~\ref{t-gue140924} and Theorem~\ref{t-gue140924I}, we obtain Theorem~\ref{t-gue140923ab} and Kodaira embedding theorem follows then.

\end{document}